\documentclass[11pt]{article}

\setlength{\textwidth}{6.5in}
\setlength{\textheight}{8.5in}
\setlength{\footskip}{0.8in}
\setlength{\unitlength}{1mm}
\setlength{\evensidemargin}{0pt}
\setlength{\oddsidemargin}{0pt}
\setlength{\topmargin}{-0.5in}

\usepackage{enumerate,xspace}
\usepackage{amsmath,xspace,amssymb}
\usepackage[dvips]{graphics}
\usepackage{proof}
\usepackage{latexsym}  
\usepackage{euscript}  
\usepackage{bussproofs}

\input xy
\xyoption{all}
\xyoption{2cell}
\UseAllTwocells
\title{Cartesian Differential Categories Revisited}
\author{G.S.H. Cruttwell\thanks{Thanks to Rick Blute, Robin Cockett, and Pieter Hofstra for useful discussions.} 
\\ Department of Mathematics and Computer Science, \\
Mount Allison, Sackville, NB}

\bibliographystyle{plain}

%
%
%
\newtheorem{observation}{Remark}[section]
\newtheorem{lemma}[observation]{Lemma}  
\newtheorem{theorem}[observation]{Theorem}
\newtheorem{definition}[observation]{Definition}
\newtheorem{example}[observation]{Example}

\newtheorem{proposition}[observation]{Proposition} 
\newtheorem{corollary}[observation]{Corollary} 
 


\newcommand{\x}{\times}
\newcommand{\<}{\langle}
\renewcommand{\>}{\rangle}
\newcommand{\dr}[1]{{\bf [DR.{#1}]}}
\newcommand{\cd}[1]{{\bf [CD.{#1}]}}


\newcommand{\rst}[1]{\ensuremath{\overline{#1}}}  
\newcommand{\rs}[1]{\ensuremath{\overline{#1}\,}}  


\newcommand{\T}{\ensuremath{\mathbb T}\xspace}
\newcommand{\X}{\ensuremath{\mathbb X}\xspace}
\newcommand{\Y}{\ensuremath{\mathbb Y}\xspace}

\newcommand{\Faa}{Fa\`{a}}
\newcommand{\faa}{\mbox{\textbf{\Faa}}}

\newcommand{\p}{\pi}


\newcommand\nats{\hbox{$I \kern - .38em N$}} 
\newcommand\ints{\hbox{$Z \kern - .65em Z$}} 


\makeatletter


\newdimen\w@dth

\def\setw@dth#1#2{\setbox\z@\hbox{\scriptsize $#1$}\w@dth=\wd\z@
\setbox\@ne\hbox{\scriptsize $#2$}\ifnum\w@dth<\wd\@ne \w@dth=\wd\@ne \fi
\advance\w@dth by 1.2em}

\def\t@^#1_#2{\allowbreak\def\n@one{#1}\def\n@two{#2}\mathrel
{\setw@dth{#1}{#2}
\mathop{\hbox to \w@dth{\rightarrowfill}}\limits
\ifx\n@one\empty\else ^{\box\z@}\fi
\ifx\n@two\empty\else _{\box\@ne}\fi}}
\def\t@@^#1{\@ifnextchar_ {\t@^{#1}}{\t@^{#1}_{}}}

\def\t@left^#1_#2{\def\n@one{#1}\def\n@two{#2}\mathrel{\setw@dth{#1}{#2}
\mathop{\hbox to \w@dth{\leftarrowfill}}\limits
\ifx\n@one\empty\else ^{\box\z@}\fi
\ifx\n@two\empty\else _{\box\@ne}\fi}}
\def\t@@left^#1{\@ifnextchar_ {\t@left^{#1}}{\t@left^{#1}_{}}}

\def\two@^#1_#2{\def\n@one{#1}\def\n@two{#2}\mathrel{\setw@dth{#1}{#2}
\mathop{\vcenter{\hbox to \w@dth{\rightarrowfill}\kern-1.7ex
                 \hbox to \w@dth{\rightarrowfill}}%
       }\limits
\ifx\n@one\empty\else ^{\box\z@}\fi
\ifx\n@two\empty\else _{\box\@ne}\fi}}
\def\tw@@^#1{\@ifnextchar_ {\two@^{#1}}{\two@^{#1}_{}}}

\def\tofr@^#1_#2{\def\n@one{#1}\def\n@two{#2}\mathrel{\setw@dth{#1}{#2}
\mathop{\vcenter{\hbox to \w@dth{\rightarrowfill}\kern-1.7ex
                 \hbox to \w@dth{\leftarrowfill}}%
       }\limits
\ifx\n@one\empty\else ^{\box\z@}\fi
\ifx\n@two\empty\else _{\box\@ne}\fi}}
\def\t@fr@^#1{\@ifnextchar_ {\tofr@^{#1}}{\tofr@^{#1}_{}}}


\newdimen\W@dth
\def\setW@dth#1#2{\setbox\z@\hbox{$#1$}\W@dth=\wd\z@
\setbox\@ne\hbox{$#2$}\ifnum\W@dth<\wd\@ne \W@dth=\wd\@ne \fi
\advance\W@dth by 1.2em}

\def\T@^#1_#2{\allowbreak\def\N@one{#1}\def\N@two{#2}\mathrel
{\setW@dth{#1}{#2}
\mathop{\hbox to \W@dth{\rightarrowfill}}\limits
\ifx\N@one\empty\else ^{\box\z@}\fi
\ifx\N@two\empty\else _{\box\@ne}\fi}}
\def\T@@^#1{\@ifnextchar_ {\T@^{#1}}{\T@^{#1}_{}}}

\def\T@left^#1_#2{\def\N@one{#1}\def\N@two{#2}\mathrel{\setW@dth{#1}{#2}
\mathop{\hbox to \W@dth{\leftarrowfill}}\limits
\ifx\N@one\empty\else ^{\box\z@}\fi
\ifx\N@two\empty\else _{\box\@ne}\fi}}
\def\T@@left^#1{\@ifnextchar_ {\T@left^{#1}}{\T@left^{#1}_{}}}

\def\Tofr@^#1_#2{\def\N@one{#1}\def\N@two{#2}\mathrel{\setW@dth{#1}{#2}
\mathop{\vcenter{\hbox to \W@dth{\rightarrowfill}\kern-1.7ex
                 \hbox to \W@dth{\leftarrowfill}}%
       }\limits
\ifx\N@one\empty\else ^{\box\z@}\fi
\ifx\N@two\empty\else _{\box\@ne}\fi}}
\def\T@fr@^#1{\@ifnextchar_ {\Tofr@^{#1}}{\Tofr@^{#1}_{}}}

\def\Two@^#1_#2{\def\N@one{#1}\def\N@two{#2}\mathrel{\setW@dth{#1}{#2}
\mathop{\vcenter{\hbox to \W@dth{\rightarrowfill}\kern-1.7ex
                 \hbox to \W@dth{\rightarrowfill}}%
       }\limits
\ifx\N@one\empty\else ^{\box\z@}\fi
\ifx\N@two\empty\else _{\box\@ne}\fi}}
\def\Tw@@^#1{\@ifnextchar_ {\Two@^{#1}}{\Two@^{#1}_{}}}

\def\to{\@ifnextchar^ {\t@@}{\t@@^{}}}
\def\from{\@ifnextchar^ {\t@@left}{\t@@left^{}}}
\def\tofro{\@ifnextchar^ {\t@fr@}{\t@fr@^{}}}
\def\To{\@ifnextchar^ {\T@@}{\T@@^{}}}
\def\From{\@ifnextchar^ {\T@@left}{\T@@left^{}}}
\def\Two{\@ifnextchar^ {\Tw@@}{\Tw@@^{}}}
\def\Tofro{\@ifnextchar^ {\T@fr@}{\T@fr@^{}}}

\makeatother


\input{diagxy}

\begin{document}
\maketitle

\begin{abstract}
We revisit the definition of Cartesian differential categories, showing that a slightly more general version is useful for a number of reasons.  As one application, we show that these general differential categories are comonadic over Cartesian categories, so that every Cartesian category has an associated cofree differential category.  We also work out the corresponding results when the categories involved have restriction structure, and show that these categories are closed under splitting restriction idempotents.  
\end{abstract}

\tableofcontents


\section{Introduction}

Cartesian differential categories \cite{cartDiff} were developed as an axiomatization of the essential properties of the derivative.  The standard example is differentiation of smooth functions between Cartesian spaces, but there are many other examples, such as differentiation of polynomials, differentiation of smooth functions between convenient vector spaces \cite{convenient-diff}, and differentiation of data types \cite{cockettFMCS2012}.  With an additional axiom, the definition gives the categorical semantics for the differential lambda calculus of \cite{diffLambda}, as described in \cite{manzonetto}.  In addition, every category with an abstract ``tangent functor'' \cite{rosicky} has an associated Cartesian differential category \cite{tangentStructure}.  For example, any model of synthetic differential geometry \cite{sdg99} has an associated Cartesian differential category.  Finally, \cite{faa} demonstrated the surprising result that there are (co)free instances of Cartesian differential categories.  

However, examined more closely, there are a number of problems with the definition of Cartesian differential category that all point to a similar root defect.  The first objection is philosophical.  In a Cartesian differential category, every map $f: X \to Y$ has an associated differential map $D[f]: X \times X \to Y$.  However, the two $X$'s in the domain of $D[f]$ play different roles.  In the canonical example of the category of smooth maps between finite-dimensional vector spaces, $D[f]$ is the Jacobian, evaluated at the second $X$, then applied in the direction of the first $X$.  In other words, we think of the first $X$ as consisting of vectors, and the second $X$ consisting of points.  The dual nature of $X$ is not reflected in the definition, and leads one to consider whether it may be possible for the two $X$'s to in fact be different objects.

A second objection occurs when one inspects the comonadicity of Cartesian differential categories.  In \cite{faa}, the authors showed that there was a comonad on left additive Cartesian categories for which certain coalgebras were Cartesian differential categories.  This leads one to wonder what the general coalgebras may be\footnote{The authors themselves note this, saying ``The more general construction...seems actually to be more natural, and this is an indication that the construction has more general forms which
we shall not explore here'' (pg. 397--398).}.  Again, the more general coalgebras point to a definition in which the domain of the derivative should be two different objects.  

The final objection occurs when one combines differential categories with restriction categories, as is done in \cite{diffRes}.  One of the most basic operations on any restriction category is to split the restriction idempotents.  Unfortunately, differential restriction categories are not closed under this operation.  Again, the problem is resolved by allowing the two elements of the domain of $D[f]$ to be seperate, so that, for example, while the derivative may only be evaluated in some open set $U \subseteq \mathbb{R}^n$, the vector along which it is taken is any vector in $\mathbb{R}^n$. 

With these considerations in mind, we reformulate the definition of Cartesian differential categories (and later, differential restriction categories).  In the new definition, not every object need have the structure of a commutative monoid.  Instead, to each object $X$ there is an assigned commutative monoid $L(X) = (L_0(X),+_X,0_X)$ which we think of as the ``object of vectors'' associated to the ``object of points'' $X$ (naturally, one of the axioms for this operation is $L(L_0(X)) = L(X)$).  The derivative of a map $f: X \to Y$ is then a map $L(X) \times X \to L(Y)$ satisfying almost identical axioms to those for Cartesian differential categories.  

Not only does this more general version solve all the problems mentioned above, it also reveals a striking new property.  In the original version, Cartesian differential categories were comonadic over Cartesian left additive categories.  In the new version, Cartesian differential categories are comonadic over Cartesian categories (that is, categories with finite products).  As nearly every naturally-occuring category has finite products, this shows that there are a vast number of Cartesian differential categories.  This is a remarkable result, considering the intricacy of the axioms, and underlines the importance of the constructions in \cite{faa}.  

The paper is laid out as follows.  We begin by giving our generalized definition of Cartesian differential categories, then show how they are the coalgebras for a slightly modified version of the \Faa \ di bruno comonad of \cite{faa}.  Fortunately, most of the work has been done in \cite{faa}; only one small modification to how one of the differential axioms is arrived at is required.  We also make a small note about the linear maps in the cofree examples.

Following this, we work out the corresponding restriction versions.  We first give the generalized version of the differential restriction categories of \cite{diffRes}, and show that unlike their ordinary counterparts, they are closed when we split the restriction idempotents.  Following this, we give a restriction version of the the \Faa \   di bruno comonad (note that this has not been done even in the non-generalized version of differential restriction categories).  There are some small subtleties that require some checking, but for the most part, the proofs are again as in \cite{faa}.  Again, however, the end result is striking: every Cartesian restriction category has an associated (generalized) differential restriction category.

All of this leads to an obvious next step: determine the nature of these cofree Cartesian differential categories, and understand how they may be used.  In particular, it may be worth understanding the associated tangent structure \cite{tangentStructure} of these examples.

\section{Cartesian differential categories revisited}

We begin by generalizing the central definition of \cite{cartDiff}.    As noted in the introduction, the main point of generalization is to allow examples where not all objects need have the structure of a commutative monoid.  Instead, each object merely has an associated commutative monoid satisfying two axioms; one thinks of this object as the ``vectors'' associated to the object.  The derivative of a map has domain taking values in the product of the object with its object of vectors.

If we have a monoid $(A,+_A,0_A)$ and maps $f,g: X \to A$, we will use $f+g$ to denote $\<f,g\>+_A$ and $0: X \to A$ to denote the map $!0_A$.  A \textbf{Cartesian category} will mean a category with chosen finite products.  

\begin{definition}
A \textbf{generalized Cartesian differential category} consists of a Cartesian category $\X$ with:
\begin{itemize}
	\item for each object $X$, a commutative monoid $L(X) = (L_0(X),+_X,0_X)$, satisfying 
		\[ L(L_0(X)) = L(X) \mbox{ and } L(X \times Y) = L(X) \times L(Y), \]
	\item for each map $f: X \to Y$, a map $D[f]: L_0(X) \times X \to L_0(Y)$ such that:

\begin{enumerate}[{\bf [CD.1]}]
\item $D(+_X) = \pi_0 +_X, D(0_X) = \pi_0 0_X$,
\item $\< a + b,c\>D[f] = \< a,b\>D[f] + \< b,c\>D[f]$ and $\< 0,a\>D[f] = 0$;
\item $D[\pi_0] = \pi_0\pi_0$, and $D[\pi_1] = \pi_0\pi_1$;
\item $D[\<f,g\>] = \< D[f],D[g]\>$;
\item $D[fg] = \<D[f],\pi_1f \>D[g]$;
\item $\<\<a,0\>,\<c,d\>\>D[D[f]] = \<a,d\>D[f]$;
\item $\< \< 0,b\>,\< c,d\>\> D[D[f]] = \<\< 0,c\>,\< b,d\>\> D[D[f]]$;
\end{enumerate}
\end{itemize}
\end{definition}

Note that only {\bf [CD.1]} has a slightly different form than given for Cartesian differential categories.  In fact, however, the definition given here is a more natural form of that axiom, as the following lemma demonstrates:

\begin{lemma}
In the presence of the other axioms, {\bf [CD.1]} is equivalent to asking that for maps $f,g: X \to L(Y)$, 
	\[ D[f+g] = D[f]+D[g] \mbox{ and } D[0]=0.  \]
\end{lemma}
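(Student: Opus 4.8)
The plan is to prove the two implications separately, using the chain rule {\bf [CD.5]}, the compatibility of $L$ with products, and the additive structure of the monoids $L(Y)$. Recall that for maps $f, g : X \to L(Y)$ the sum $f + g$ is by definition $\langle f, g \rangle +_Y$, and $0 : X \to L(Y)$ is $!\,0_Y$; here $+_Y : L_0(Y) \times L_0(Y) \to L_0(Y)$ and $0_Y : \top \to L_0(Y)$ are the structure maps of the commutative monoid $L(Y)$, so they are themselves maps in $\X$ to which $D[-]$ applies.

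First I would show {\bf [CD.1]} implies the displayed condition. Given $f, g : X \to L(Y)$, write $f + g = \langle f, g\rangle +_Y$ and apply the chain rule {\bf [CD.5]} to the composite, then {\bf [CD.4]} to handle the pairing $\langle f, g \rangle$, obtaining $D[f+g] = \langle \langle D[f], D[g]\rangle, \pi_1\langle f,g\rangle \rangle D[+_Y]$. Now substitute $D[+_Y] = \pi_0 +_Y$ from {\bf [CD.1]}: the $\pi_0$ projects away the point component, leaving $\langle D[f], D[g]\rangle +_Y$, which is exactly $D[f] + D[g]$. The argument for $D[0] = 0$ is the same using $0 = {!}\,0_Y$, the naturality/terminality of $!$, and $D[0_Y] = \pi_0 0_Y$ — or more simply $D[0] = \langle 0, 0 \rangle D[f]$-style reasoning via {\bf [CD.5]} again. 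One must be a little careful that $+_Y$ and $0_Y$ really are maps with the expected domains, i.e.\ that $L(Y \times Y) = L(Y) \times L(Y)$ is used so that $+_Y : L_0(Y)\times L_0(Y)\to L_0(Y)$ composes correctly; this is where the second $L$-axiom enters.

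Conversely, to recover {\bf [CD.1]} from the displayed condition, I would instantiate it at the universal additive maps. For $D(+_X) = \pi_0 +_X$: take the two projections $\pi_0, \pi_1 : X \times X \to X$, which into $L_0(X)$ would need $X = L_0(X)$ — so instead the cleaner route is to take $f = \pi_0, g = \pi_1$ as maps $L_0(X) \times L_0(X) \to L_0(X)$ (legitimate since these have codomain $L_0(X)$, an object of the form $L_0(-)$), note $f + g = \langle \pi_0,\pi_1\rangle +_X = +_X$, and compute $D[f+g] = D[\pi_0] + D[\pi_1] = \pi_0\pi_0 + \pi_0\pi_1$ using {\bf [CD.3]}. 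Then $\pi_0\pi_0 + \pi_0\pi_1 = \langle \pi_0\pi_0, \pi_0\pi_1\rangle +_X = \pi_0\langle\pi_0,\pi_1\rangle +_X = \pi_0 +_X$, giving the first half of {\bf [CD.1]}. Similarly $D(0_X) = \pi_0 0_X$ follows by taking $f = g$ to be (say) $\pi_0$ and using $D[0] = 0$ together with {\bf [CD.3]}, or by a direct instance $0 = {!}\,0_X : L_0(X) \to L_0(X)$.

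The main obstacle is bookkeeping with the object-of-vectors formalism: one has to keep straight when a composite lands in an object of the form $L_0(Z)$ (so that $D$ of it makes sense as a map into $L_0$ of that object) and to use $L(X \times Y) = L(X)\times L(Y)$ consistently so that, e.g., the monoid operation $+_Y$ has the right typed domain and the projections behave as expected. Modulo this, both directions are short diagram chases combining {\bf [CD.3]}, {\bf [CD.4]}, {\bf [CD.5]}, and the definition of $f + g$ and $0$; no use of the "hard" axioms {\bf [CD.6]}–{\bf [CD.7]} is needed.
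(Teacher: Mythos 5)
Your proposal is correct and takes essentially the same route as the paper: one direction is the computation $D[f+g]=\langle\langle D[f],D[g]\rangle,\pi_1\langle f,g\rangle\rangle D[+_Y]=\langle D[f],D[g]\rangle +_Y$ via {\bf [CD.4]}, {\bf [CD.5]} and {\bf [CD.1]}, and the other is exactly the paper's observation that $+_X=\pi_0+\pi_1$ (your instantiation at the projections) together with {\bf [CD.3]}, with the $0$ case handled analogously.
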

\begin{proof}
Suppose we know $D(f+g) = D(f) + D(g)$ and $D(0) = 0$. Note that $+_X = \pi_0 + \pi_1$, so we have 
	\[ D(+_X) = D(\pi_0 + \pi_1) = D(\pi_0) + D(\pi_1) = \pi_0\pi_0 + \pi_0\pi_1 = \pi_0(+_X), \]
and similarly for $0$.  

Conversely, suppose we know that {\bf [CD.1]} is satisfied.  Consider:
	\[ D(f+g) = D(\<f,g\>+_X) = \<D(\<f,g\>),\pi_1\<f,g\>\>D(+_X) = \]
	\[ \<\<Df,Dg\>,\pi_1\<f,g\>\>\pi_0+_X = \<Df,Dg\>+_X = Df + Dg, \]
and similarly for the preservation of $0$.
\end{proof}

This then gives us the following:
\begin{example}
Any Cartesian differential category is a generalized Cartesian differential category, with $L(X) := (X,\pi_0 + \pi_1, 0)$.
\end{example}

For example, the category of finite dimensional vector spaces and smooth maps between them, or the category of convenient vector spaces and smooth maps between them \cite{convenient-diff} are examples. 

It is important to note that generalizing the definition in this way also allows for trivial examples:

\begin{example}
If $\X$ is a Cartesian category, then defining 
	\[ L(X) := 1 \mbox{ and } D[f] := \ !, \]
gives $\X$ the structure of a generalized Cartesian differential category.  
\end{example}
 
Applying Proposition \ref{propSplitting} gives us the following non-trivial generalized example:

\begin{example}
The category with objects $U \subseteq \mathbb{R}^n$ and smooth maps 
	\[ f: (U \subseteq \mathbb{R}^n) \to (V \subseteq \mathbb{R}^m) \]
forms a generalized Cartesian differential category, with the Jacobian as the derivative.
\end{example}
In the next section, we shall see that every Cartesian category generates a cofree generalized Cartesian differential category which is only trivial if the only monoid in $\X$ is the terminal object.

\subsection{The \Faa \ di Bruno comonad and its coalgebras}

In this section, we generalize the \Faa \ di Bruno comonad of \cite{faa}.

\begin{definition}
Let \textbf{cartCat} denote the category whose objects are Cartesian categories, and whose arrows are functors which preserve the specified products exactly.
\end{definition}

\begin{proposition}
There is an endofunctor on \textbf{cartCat}, $\mbox{\faa}$, with $\mbox{\faa}(\X)$ having:
\begin{itemize}
	\item objects pairs $((A,+,0),X)$ with $(A,+,0)$ a commutative monoid in $\X$, and $X$ an object of $\X$;
	\item a morphism from $((A,+_A,0_A),X)$ to $((B,+_B,0_B),Y)$ consists of an infinite sequence of maps $(f_*,f_1,f_2, \ldots)$ with $f_*: X \to Y$ simply a map in $X$, and for each $n$, $f_n: A \times A \times \ldots \times A \times X \to B$ is a map in $\X$ that is additive and symmetric in its first $n$ variables,
	\item composition and identities as in \cite{faa};
	\item with the product 
		\[ ((A,+_A,0_A),X) \times ((B,+_B,0_B),Y)) := ((A \times B, \mbox{ex}(+_A \times +_B), 0_A \times 0_B),X \times Y) \]
		(where ex is the map that interchanges the interior two terms) and projections
		\[ \pi_{(A,X)} := (\pi_X, \pi_0 \pi_A, 0, 0, \ldots), \pi_{(B,Y)} := (\pi_Y, \pi_0 \pi_B, 0, 0, \ldots), \]
\end{itemize}
and, given a Cartesian functor $F: \X \to \Y$, $\mbox{\faa}(F)$ has the obvious action:
\begin{itemize}
	\item $\mbox{\faa}(F)((A,+,0),X) = ((FA,F+,F0),FX)$;
	\item $\mbox{\faa}(F)(f_*,f_1,f_2, \ldots) = (F(f_*),F(f_1),F(f_2), \ldots)$,
\end{itemize}
which is well-defined since $F$ preserves the specified products.  
\end{proposition}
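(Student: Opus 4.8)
The plan is to establish the statement in three stages: that $\faa(\X)$ is a well-defined category with the stated composition and identities; that it carries finite products given by the displayed formulas; and that $\faa$ is a functor on \textbf{cartCat}. For the first stage almost everything is imported from \cite{faa}: the \faa\ di Bruno composition formula, together with the proofs that it is associative and unital with identity $(\mathrm{id}_X, \pi_0, 0, 0, \ldots)$ on $((A,+_A,0_A),X)$ (where $\pi_0: A \x X \to A$ is the first projection), transcribes verbatim, the only change being that the monoid attached to a source object is now carried explicitly instead of being the canonical left-additive structure of that object. Two points I would still check. First, that the class of maps ``additive and symmetric in the first $n$ variables'' is closed under the composition formula; this is exactly the corresponding lemma of \cite{faa}, and its proof uses only associativity and commutativity of the monoids and the additivity and symmetry of the components, so it is unaffected by the generalization. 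Second, that the composition formula never adds an element of $A$ to an element of $X$ --- a ``vector'' to a ``point'' --- which would now be ill-typed: inspecting the formula, every sum occurring in $(fg)_n$ is taken in the target monoid $(C,+_C,0_C)$, and the block maps $g_k$ are only precomposed with tuples built from the $f_i$ in the monoid coordinates and from a projection followed by $f_*$ in the point coordinate, so no such addition occurs.

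For the second stage I would verify the product structure directly. The object $((A \x B,\; \mathrm{ex}(+_A \x +_B),\; 0_A \x 0_B),\; X \x Y)$ is a commutative monoid --- the standard fact that a product of commutative monoids is one, with $\mathrm{ex}$ supplying the rebracketing --- and $((1, !, !), 1)$ is terminal, since any morphism into it is a sequence of maps with codomain $1$ and hence unique. For binary products the pairing of $(h_*, h_1, h_2, \ldots)$ and $(k_*, k_1, k_2, \ldots)$ is formed coordinatewise, $(\<h_*,k_*\>, \<h_1,k_1\>, \<h_2,k_2\>, \ldots)$, each $\<h_n,k_n\>$ being additive and symmetric in its first $n$ variables because $h_n$ and $k_n$ are. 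The substantive computation is that composing this pairing with $\pi_{(A,X)} = (\pi_X, \pi_0\pi_A, 0, 0, \ldots)$ returns $(h_*, h_1, h_2, \ldots)$: because $\pi_{(A,X)}$ has all components past the second equal to $0$, the partition-sum computing the $n$th component of the composite collapses to the single term indexed by the one-block partition, which reduces to $h_n$; the same collapse gives uniqueness of the mediating morphism. I expect this projection-composite calculation, together with carrying the additivity and symmetry conditions through the pairing and the interchange map, to be the main (though routine) obstacle, since it is the piece that is genuinely new relative to \cite{faa}.

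For the third stage, since $F: \X \to \Y$ preserves the chosen products on the nose, it carries the commutative-monoid equations for $(A,+,0)$ --- which are equalities between maps built from products --- to those for $(FA, F+, F0)$, so $\faa(F)$ is well-defined on objects; and since $\faa(F)$ acts componentwise on morphisms while the composition formula, the identities, and the product and projection data of $\faa(\X)$ are assembled entirely from products and the monoid operations, $\faa(F)$ is a functor that preserves the terminal object and binary products exactly, hence is an arrow of \textbf{cartCat}. Finally $\faa(\mathrm{id}) = \mathrm{id}$ and $\faa(G F) = \faa(G)\,\faa(F)$ are immediate from the componentwise description of $\faa$ on morphisms.
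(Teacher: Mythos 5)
Your proposal is correct and follows essentially the same route as the paper, which simply observes that the proof is identical to the Faà di Bruno construction of \cite{faa}; the points you single out (closure of the additivity/symmetry condition under the composition formula, the collapse of the composition sum against the projections because their higher components are $0$, and exactness of product preservation by $F$) are exactly the routine checks that transfer, with the product of pairs being the only genuinely new but straightforward piece.
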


\begin{proof}
The proof is identitical to that in \cite{faa}.
\end{proof}

There is a natural comparison between this endofunctor and the commutative monoid endofunctor, which we now describe.  

\begin{definition}
let $\textbf{cMon}$ denote the endofunctor on $\textbf{cartCat}$ which sends a category $\X$ to its category of commutative monoids and additive maps between them (with its obvious product struture).
\end{definition}

\begin{proposition}
There is a natural transformation $\lambda: \textbf{cMon}(\X) \to \faa(\X)$, which maps
	\[ (A,+,0) \mapsto ((A,+,0),A) \]
and
	\[ A \to^f B \mapsto (f,\pi_0f, 0, 0, \ldots). \]
\end{proposition}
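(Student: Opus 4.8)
The plan is to verify directly that $\lambda$ is a well-defined functor on each fibre $\mathbf{cMon}(\X) \to \faa(\X)$ and that these functors are natural in $\X$. First I would check that $\lambda$ lands in $\faa(\X)$: given a commutative monoid $(A,+,0)$, the pair $((A,+,0),A)$ is a legitimate object of $\faa(\X)$ since $(A,+,0)$ is by hypothesis a commutative monoid in $\X$. Given an additive map $f \colon A \to B$ of commutative monoids, I need $(f, \pi_0 f, 0, 0, \ldots)$ to be a morphism in $\faa(\X)$; this requires $f_* = f \colon A \to A$ to be a plain map (immediate), and for each $n$ the component $f_n$ to be additive and symmetric in its first $n$ variables. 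Here $f_1 = \pi_0 f \colon A \times A \to B$, which is additive in its single first variable precisely because $f$ is additive, and $f_n = 0$ for $n \geq 2$, which is trivially additive and symmetric. So $\lambda$ is well-defined on objects and morphisms.

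Next I would check functoriality: $\lambda$ preserves identities and composition. The identity on $(A,+,0)$ in $\mathbf{cMon}(\X)$ is $1_A$, and $\lambda(1_A) = (1_A, \pi_0, 0, 0, \ldots)$, which must be shown to be the identity morphism on $((A,+,0),A)$ in $\faa(\X)$ as defined in \cite{faa}; this is a direct unwinding of the identity formula there. For composition, given $A \to^f B \to^g C$, I would expand the \Faa\ di Bruno composition formula applied to $(f,\pi_0 f,0,\ldots)$ and $(g,\pi_0 g,0,\ldots)$ and confirm it collapses to $(gf, \pi_0(fg), 0, \ldots)$ — the higher partition terms all vanish because every $f_n$ and $g_n$ with $n \geq 2$ is zero, so only the single partition of $1$ survives, giving $\pi_0 f$ followed by $g$ in the first component, i.e. $\langle \pi_0 f, \pi_1 f \rangle (\pi_0 g)$ which reduces to $\pi_0(fg)$ after a short computation using that $\pi_0 g$ ignores its second argument. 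Then I would verify $\lambda$ preserves products: $\lambda$ applied to $(A,+_A,0_A) \times (B,+_B,0_B) = (A \times B, \mathrm{ex}(+_A \times +_B), 0_A \times 0_B)$ gives $((A \times B, \ldots), A \times B)$, which matches the product in $\faa(\X)$ of $((A,+_A,0_A),A)$ and $((B,+_B,0_B),B)$ by the product formula in the previous proposition, and similarly the projections correspond.

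Finally, naturality in $\X$: for a Cartesian functor $F \colon \X \to \Y$, I would check that the square relating $\mathbf{cMon}(F)$, $\faa(F)$, $\lambda_\X$, and $\lambda_\Y$ commutes. Both composites send $(A,+,0)$ to $((FA, F+, F0), FA)$ and $f$ to $(Ff, F(\pi_0 f), 0, \ldots) = (Ff, \pi_0(Ff), 0, \ldots)$, the last equality because $F$ preserves the specified products and hence projections. So the square commutes on the nose.

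I expect the main (though still modest) obstacle to be the composition check: one must be careful that the \Faa\ di Bruno composition formula from \cite{faa}, which in general involves a sum over partitions with symmetric-multilinear correction terms, genuinely degenerates to ordinary composition when all components beyond the first are zero, and that the first component comes out as $\pi_0(fg)$ rather than something merely isomorphic to it. Everything else is a routine unwinding of definitions already recorded in \cite{faa}.
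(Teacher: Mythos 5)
Your proposal is correct and follows essentially the same route as the paper: direct verification that the components land in $\faa(\X)$, functoriality via the collapse of the \Faa\ di Bruno composition formula to $\<\pi_0 f,\pi_1 f\>\pi_0 g = \pi_0 fg$, product preservation via the projections being $\lambda_{\X}(\pi_0),\lambda_{\X}(\pi_1)$, and naturality using that $F$ preserves the specified products (apart from the harmless slip writing $gf$ for the composite, and you could note, as the paper does, that the higher zero components agree because the zero of the codomain monoid in $\faa(\Y)$ is $F(0_B)$).
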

\begin{proof}
$\lambda_{\X}(f)$ is a valid map in $\faa(\X)$ since $f$ is additive.  For each $\X$, $\lambda_{\X}$ is a functor since $1_{((A,+,0),A)} = (1,\pi_0, 0, \ldots)$ and
\begin{eqnarray*}
\lambda_{\X}(f)\lambda_{\X}(g) & = & (f,\pi_0f, 0, \ldots)(g, \pi_0g, 0, \ldots) \\
& = & (fg, \<\pi_0f, \pi_1f\>\pi_0g, 0, \ldots) \\
& = & (fg, \pi_0fg, 0, \ldots) \\
& = & \lambda_{\X}(fg).
\end{eqnarray*}
It preserves products since the projections in  $\faa(\X)$ are $\lambda_{\X}(\pi_0)$, $\lambda_{\X}(\pi_1)$.  For naturality, for a product-preserving functor $F: \X \to \Y$ we need the diagram
\[
\bfig
	\square<1000,500>[\textbf{cMon}(\X)`\faa(\X)`\textbf{cMon}(\Y)`\faa(\Y);\lambda_{\X}`\textbf{cMon}(F)`\faa(F)`\lambda_{\Y}]
\efig
\]
to commute.  It is easy to see that on objects these two composite functors are equal, while for an addition-and-0-preserving map $f: (A,+_A,0_A) \to (B,+_B, 0_B) \in \X$, 
	\[ \lambda_{\Y}(\textbf{cMon}(F)(f)) = \lambda_{\Y}(F(f)) = (F(f),\pi_0 F(f),F(0_B),\ldots) \]
(since the $0$ in the codomain is $F(0_B)$), while
	\[ \faa(F)(\lambda_{\X}(f)) = \faa(F)(f,\pi_0f,0_B,\ldots) = (F(f),\pi_0 F(f),F(0_B), \ldots) \]
since $F$ preserves the specified products.  So $\lambda$ is natural, as required.  
\end{proof}

\begin{corollary}
If $(A,+,e)$ is a commutative monoid in $\X$, then 
	\[ ((A,+,0),A),(+,\pi_0+,0,\ldots),(0,\pi_0 0, 0,\ldots)) \]
is a commutative monoid in $\faa(\X)$.
\end{corollary}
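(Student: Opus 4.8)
The plan is to deduce this immediately from the preceding proposition, which establishes that $\lambda_{\X}\colon \textbf{cMon}(\X) \to \faa(\X)$ is a functor preserving the specified finite products. Since any product-preserving functor carries commutative monoid objects to commutative monoid objects, it is enough to observe that the commutative monoid $(A,+,e)$, viewed as an \emph{object} of $\textbf{cMon}(\X)$, itself underlies a commutative monoid object \emph{in the category} $\textbf{cMon}(\X)$, whose multiplication and unit are the maps $+\colon A \times A \to A$ and $e\colon 1 \to A$; applying $\lambda_{\X}$ to this data then yields the asserted commutative monoid in $\faa(\X)$.

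First I would check that $+$ and $e$ are genuinely morphisms of $\textbf{cMon}(\X)$, i.e. additive maps. For $e\colon 1 \to A$ this is automatic, since $1$ is the terminal commutative monoid. For $+\colon A \times A \to A$, additivity is precisely the identity $(a_1 + b_1) + (a_2 + b_2) = (a_1 + a_2) + (b_1 + b_2)$, which is forced by associativity together with commutativity of $(A,+,e)$ --- so this single point is exactly where the hypothesis that the monoid is commutative is used. Next I would note that, equipped with $+$ and $e$, the object $(A,+,e)$ satisfies the associativity, unit, and commutativity diagrams of a commutative monoid object: each such diagram is a diagram of additive maps that already commutes in $\X$ by the monoid axioms for $(A,+,e)$, and hence commutes in $\textbf{cMon}(\X)$ because the forgetful functor $\textbf{cMon}(\X) \to \X$ is faithful and preserves the products involved (including the terminal object).

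Finally I would apply $\lambda_{\X}$. On objects, $\lambda_{\X}(A,+,e) = ((A,+,0),A)$, matching the statement; and unwinding the definition of $\lambda_{\X}$ on the two structure maps gives $\lambda_{\X}(+) = (+,\pi_0 +,0,0,\ldots)$ and $\lambda_{\X}(e) = (e,\pi_0 e,0,0,\ldots)$ --- these are exactly the sequences displayed in the corollary (with the monoid unit written there as $0$). Because $\lambda_{\X}$ is a functor preserving the specified finite products, it transports the commutative-monoid-object diagrams from $\textbf{cMon}(\X)$ to the corresponding diagrams in $\faa(\X)$, so the image is a commutative monoid in $\faa(\X)$, as claimed.

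I do not expect a real obstacle here; the only step warranting a moment's care is the first --- that the multiplication of a commutative monoid is itself a homomorphism --- which is the standard Eckmann--Hilton-flavoured observation that commutative monoids in $\textbf{cMon}(\X)$ coincide with commutative monoids in $\X$. Should one prefer to avoid that abstraction, the three defining diagrams can instead be verified directly in $\faa(\X)$, using the functoriality and product-preservation of $\lambda_{\X}$ to reduce each of them to the corresponding identity for $(A,+,e)$ already available in $\X$.
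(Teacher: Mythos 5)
Your proposal is correct and follows essentially the same route as the paper: the paper's (one-line) proof likewise observes that, by commutativity, $(A,+,0)$ is a commutative monoid object in $\textbf{cMon}(\X)$ and is therefore sent by the product-preserving functor $\lambda_{\X}$ to a commutative monoid in $\faa(\X)$. Your write-up simply spells out the details (additivity of $+$ via the Eckmann--Hilton-style interchange identity, and the transport of the monoid diagrams along $\lambda_{\X}$) that the paper leaves implicit.
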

\begin{proof}
Since $(A,+,0)$ is commutative, $(A,+,0)$ is a commutative monoid in $\textbf{cMon}(\X)$, and so gets sent by $\lambda_{\X}$ to a commutative monoid in $\faa(\X)$.
\end{proof}

\begin{theorem}
$\mbox{\faa}$ has the structure of a comonad, with counit $\epsilon: \mbox{\faa}(\X) \to \X$ given by:
\begin{itemize}
	\item $\epsilon((A,+,0),X) = X$,
	\item $\epsilon(f_*,f_1,f_2,\ldots) = f_*$,
\end{itemize}
and comultiplication $\delta: \faa(\X) \to \faa^2(\X)$ given by:
\begin{itemize}
	\item $\delta((A,+,0),X) = (((A,+,0),A),(+,\pi_0+,0,\ldots),(0,\pi_0 0,0,\ldots)),((A,+,0),X)))$,
	\item action on arrows as in \cite{faa}.
\end{itemize}
\end{theorem}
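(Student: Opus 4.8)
The plan is to leverage the work already done in \cite{faa} as much as possible. Since the underlying endofunctor $\mbox{\faa}$ restricts, object-by-object, to essentially the \Faa\ di Bruno construction of \cite{faa} (the only difference being the extra monoid component $(A,+,0)$ carried along passively), most of the verifications transfer verbatim. First I would confirm that $\epsilon$ and $\delta$ are well-defined. For $\epsilon$, this is immediate: projecting a sequence $(f_*,f_1,f_2,\ldots)$ to $f_*$ is a functor because composition in $\mbox{\faa}(\X)$ was defined in \cite{faa} so that the $*$-component of a composite is the composite of the $*$-components, and it preserves products since $\pi_{(A,X)}$ has $*$-component $\pi_X$. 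For $\delta$, the key point is that the displayed object $(((A,+,0),A),(+,\pi_0+,0,\ldots),(0,\pi_0 0,0,\ldots))$ is genuinely a commutative monoid in $\mbox{\faa}(\X)$ — but this is exactly the Corollary proved just above, obtained by transporting the commutative monoid $(A,+,0)$ along the functor $\lambda_{\X}$. So $\delta$ lands in $\mbox{\faa}^2(\X)$ on objects, and its action on arrows is copied from \cite{faa}.

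Next I would verify the three comonad laws: $\delta \epsilon_{\mbox{\faa}(\X)} = \mathrm{id}$, $\delta\, \mbox{\faa}(\epsilon) = \mathrm{id}$, and coassociativity $\delta\, \mbox{\faa}(\delta) = \delta \delta_{\mbox{\faa}(\X)}$. On objects these are straightforward bookkeeping: applying $\epsilon$ to $\delta((A,+,0),X)$ in either slot returns $((A,+,0),X)$, using $L(L_0(X))=L(X)$-style identities baked into the monoid component (here concretely $(A,+,0)$ reappears because the inner monoid of $((A,+,0),A)$ with addition $\pi_0+$ has underlying object $A$ and its counit recovers $(A,+,0)$), and coassociativity on objects reduces to the associativity coherence of iterating the $\lambda_{\X}$-image of $(A,+,0)$. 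On arrows, all three identities were checked in \cite{faa} for the original \Faa\ di Bruno comonad; since our arrow-components are literally those of \cite{faa} and the monoid components are acted on functorially and passively, the same computations apply. I would also check naturality of $\epsilon$ and $\delta$ in $\X$, i.e.\ that for a Cartesian functor $F\colon \X \to \Y$ the appropriate squares with $\mbox{\faa}(F)$ commute — again immediate since $F$ preserves the specified products and hence commutes with all the structure maps ($\pi_0$, $+$, $0$) appearing in the definitions of $\epsilon$ and $\delta$.

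The one place requiring genuine (if small) care is the interaction of $\delta$ with the monoid component in a way that wasn't present in \cite{faa}: one must check that $\delta$ as defined on arrows actually respects the additive-and-symmetric constraints on the higher components, now relative to the new monoid $((A,+,0),A)$ rather than a single fixed object. Concretely, the higher components of $\delta(f_*,f_1,\ldots)$ must be additive and symmetric in their first variables \emph{as maps out of products of $((A,+,0),A)$}, which amounts to checking that the relevant additions agree — and this is precisely the content of the Proposition establishing $\lambda$, together with the Corollary. So this reduces to already-established facts.

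\textbf{Main obstacle.} I expect the genuine-work step to be the bookkeeping in coassociativity on objects: tracking that the doubly-iterated monoid object produced by $\delta\,\mbox{\faa}(\delta)$ agrees on the nose (not just up to isomorphism) with the one produced by $\delta\delta_{\mbox{\faa}(\X)}$, since \textbf{cartCat} uses strict product-preservation and all the equalities like $L(X\times Y) = L(X)\times L(Y)$ must hold strictly. This is where one would be most tempted to hand-wave, and where the ``one small modification'' the introduction promised may actually bite; but given that \cite{faa} carried out the analogous strict computation, I would expect it to go through with only notational overhead rather than a new idea.
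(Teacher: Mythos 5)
Your proposal is correct and follows essentially the same route as the paper: defer all the arrow-level and comonad-law computations to the original \Faa\ di Bruno construction in \cite{faa}, and isolate as the only genuinely new check that the object $\delta((A,+,0),X)$ — in particular the monoid $(((A,+,0),A),(+,\pi_0+,0,\ldots),(0,\pi_0 0,0,\ldots))$ — is a legitimate commutative monoid in $\faa(\X)$, which is exactly the preceding Corollary obtained via $\lambda_{\X}$. The extra worries you flag (coassociativity bookkeeping on objects, naturality in $\X$) are handled in the paper simply by appeal to \cite{faa}, so no new idea is needed there.
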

\begin{proof}
Again, the hard work has been done in \cite{faa}.  The only thing extra needed to check here is that $(((A,+,0),A),(+,\pi_0+,0,0,\ldots),(0,\pi_0 0,),0,0,\ldots),(A,+,0),X)$ is an object of $\faa^2(\X)$, and this was done in the previous corollary. 
\end{proof}

\begin{theorem}
The coalgebras for the comonad $\faa$ are exactly the generalized Cartesian differential categories.
\end{theorem}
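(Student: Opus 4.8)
As with the analogous theorem in \cite{faa}, the plan is to exhibit a bijection between $\faa$-coalgebra structures on a Cartesian category $\X$ and generalized Cartesian differential structures on $\X$, and to check that the two constructions are mutually inverse; since both fix the underlying category and act as the identity on product-preserving functors, this also identifies the morphisms, so that the $\faa$-coalgebras are exactly the generalized Cartesian differential categories, and in fact the two categories are isomorphic.

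First I would pass from a coalgebra to a generalized Cartesian differential category. A coalgebra structure is a product-preserving functor $\phi \colon \X \to \faa(\X)$ with $\epsilon_{\X} \circ \phi = 1_{\X}$ and $\faa(\phi)\circ \phi = \delta_{\X} \circ \phi$. The counit law forces $\phi(X) = ((L_0(X),+_X,0_X),X)$ and $\phi(f) = (f, D[f], f_2, f_3, \dots)$, and this is how one reads off the assignments $L$ and $D$. The typing of morphisms in $\faa(\X)$ makes $D[f]$ a map $L_0(X) \x X \to L_0(Y)$, and additivity of $f_1$ in its first variable is exactly \cd{2}. Functoriality of $\phi$, together with the formula for composition in $\faa(\X)$, yields the chain rule \cd{5} (and the identity law); product-preservation of $\phi$, together with the description of the projections in $\faa(\X)$, yields \cd{3}, \cd{4} and the identity $L(X \x Y) = L(X) \x L(Y)$. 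The comultiplication law supplies the rest. Evaluated on an object $X$, comparing $\faa(\phi)(\phi(X))$ with $\delta_{\X}(\phi(X))$ forces $\phi(L_0(X)) = ((L_0(X),+_X,0_X),L_0(X))$ --- which is the axiom $L(L_0(X)) = L(X)$ --- and forces $\phi(+_X) = (+_X, \pi_0 +_X, 0, \dots)$ and $\phi(0_X) = (0_X, \pi_0 0_X, 0, \dots)$, whose degree-one components are precisely \cd{1}. Evaluated on an arrow $f$, the comultiplication law pins down $f_2$ as a prescribed combination of $D[D[f]]$; fed back into the additivity-and-symmetry requirement on $f_2$, this is equivalent to \cd{6} and \cd{7}, and the higher $f_n$ are then forced in turn, so the whole coalgebra structure is recovered from $(L,D)$.

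For the converse I would start from a generalized Cartesian differential category and set $\phi(X) := (L(X),X)$ and $\phi(f) := (f, D[f], D^{(2)}[f], D^{(3)}[f], \dots)$, where $D^{(n)}[f]$ is the $n$-fold iterated derivative packaged as in \cite{faa}. Almost all of the required checks --- that each $D^{(n)}[f]$ is additive and symmetric in its first $n$ arguments (from \cd{2}, \cd{6}, \cd{7}), that $\phi$ is a functor (the \Faa\ di Bruno composition formula, from \cd{5}), that $\phi$ preserves the chosen products (from \cd{3}, \cd{4}), and that $\phi$ satisfies the counit and comultiplication laws --- are carried out in \cite{faa}. The only adjustment is that the object-level comultiplication law now additionally needs $D(+_X) = \pi_0 +_X$ and $D(0_X) = \pi_0 0_X$, i.e.\ \cd{1}, which by the Lemma above is equivalent to the form of that axiom used in \cite{faa}.

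I expect the main obstacle to be the bookkeeping around the comultiplication law on arrows: isolating its degree-two part and recognising it as the conjunction of \cd{6} and \cd{7}, and, in the reverse direction, checking that the iterated derivatives $D^{(n)}[f]$ carry the right symmetries and obey the \Faa\ di Bruno composition law. Since this is precisely the content of the corresponding computation in \cite{faa}, the only genuinely new ingredient is the emergence of \cd{1} from the monoid-structure components of $\delta$, reconciled with the original treatment by the Lemma above.
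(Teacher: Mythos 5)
Your proposal follows essentially the same route as the paper: read off $L$ and $D$ from the counit law, extract $L(L_0(X))=L(X)$ and \cd{1} from the comultiplication law on objects, defer \cd{2}--\cd{7} and the converse's bulk to \cite{faa}, and reconcile the new form of \cd{1} via the preceding Lemma. The only detail you gloss is that, in the converse direction, one must also check the higher components of ${\cal D}(+_X)$ and ${\cal D}(0_X)$ vanish --- a one-line computation from \cd{5}, \cd{3} and \cd{2} that the paper carries out explicitly.
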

\begin{proof}
Again, most of this is as in \cite{faa}.  If we have a coalgebra ${\cal D}: \X \to \faa(\X)$, we let ${\cal D}(X) = ({\cal D}_0(X),{\cal D}_1(X))$.  Since ${\cal D}$ satisfies the counit equations, we must have ${\cal D}_1(X) = X$.  We define $L(X) := {\cal D}_0(X)$, and $D[f] := [{\cal D}(f)]_1$.  Since ${\cal D}$ preserves products, we have $L(X \times Y) = L(X) \times L(Y)$.  

Writing $L(X)$ as $(L_0(X), +_X, 0_X)$, the coassociativity equation
\[
	\bfig
	\square<750,375>[\X`\faa(\X)`\faa(\X)`\faa^2(\X);{\cal D}`{\cal D}`\delta`\faa({\cal D})]
	\efig
\]
on objects tells us that 
	\[ (((L(L_0(X), L_0(X)), {\cal D}(+_X), {\cal D}(e_X), (L(X), X)) \]
	\[ = (((L(X),L_0(X)), (+_X, \pi_0+_X, 0, \ldots), (0_X, \pi_0 0_X, 0, \ldots)), (L(X), X)), \]
so that we get
	\[ L(L_0(X)) = L(X), D(+_X) = \pi_0+_x, \mbox{ and } D(0_X) = \pi_0 0_x.  \] 
The equations \cd{2}--\cd{7} follow exactly as in \cite{faa}.  

Conversely, if we have a generalized Cartesian differential category, we define 
	\[ {\cal D}(X) := (L(X), X)) \]
and
	\[ {\cal D}(f) := (f,D(f), D_2(f), D_3(f), \ldots) \]
where
	\[ D_n(f) := \<0, 0, \ldots 0, \pi_0, \pi_1, \ldots \pi_n\>D^n(f). \]
Almost all of the work in showing that this is a coalgebra is done in $\cite{faa}$; the only thing left to check is that ${\cal D}(+) = (+, \pi_0+, 0, \ldots)$ and ${\cal D}(0) = (0, \pi_0 0, 0, \ldots)$.  But \cd{1} gives ${\cal D}(+)_1 = \pi_0+$, and the higher terms are then 0, as 
	\[ \<0,\pi_0, \pi_1,\pi_2\>D^2(+) = \<0,\pi_0, \pi_1,\pi_2\>D(\pi_0+) = \<0,\pi_0, \pi_1,\pi_2\>\pi_0\pi_0D(+) = 0 \] 
and similarly for ${\cal D}(0)$.
\end{proof}

\begin{corollary}
If $\X$ is a Cartesian category, then $\faa(\X)$ is a generalized Cartesian differential category, with
	\[ D(f) = [\delta(f)]_1. \]
\end{corollary}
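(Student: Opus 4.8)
The plan is to recognize this as an instance of the general fact that, for any comonad, the comultiplication makes every value of the underlying endofunctor into a cofree coalgebra, and then to quote the preceding theorem identifying $\faa$-coalgebras with generalized Cartesian differential categories.

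First I would recall the general categorical observation: if $(T,\epsilon,\delta)$ is a comonad on a category $\mathbb{K}$, then for each object $K$ of $\mathbb{K}$ the morphism $\delta_K\colon TK \to T^2K$ is a $T$-coalgebra structure on $TK$. Indeed, one of the comonad counit identities is precisely the coalgebra counit law $\delta_K\,\epsilon_{TK} = 1_{TK}$, and comonad coassociativity is precisely the coalgebra coassociativity law $\delta_K\,\delta_{TK} = \delta_K\,T(\delta_K)$ (reading composition left to right, as elsewhere in the paper). Applying this with $\mathbb{K} = \textbf{cartCat}$, with $T = \faa$ and its comonad structure from the theorem above, and with $K = \X$, we conclude that $\delta_\X\colon \faa(\X) \to \faa^2(\X) = \faa(\faa(\X))$ is a coalgebra for $\faa$. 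The only bookkeeping point is that $\delta_\X$ is genuinely a morphism of $\textbf{cartCat}$, i.e.\ a functor preserving the chosen products on the nose, but this was already part of verifying that $\faa$ is a comonad.

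Next, by the theorem stating that the coalgebras for $\faa$ are exactly the generalized Cartesian differential categories, $\faa(\X)$ equipped with the coalgebra map $\delta_\X$ carries the structure of a generalized Cartesian differential category. It then remains only to read the derivative off of that correspondence. In the proof of that theorem, a coalgebra ${\cal D}\colon \Y \to \faa(\Y)$ induces $L(Z) := [{\cal D}(Z)]_0$ and $D[f] := [{\cal D}(f)]_1$; specializing to $\Y := \faa(\X)$ and ${\cal D} := \delta_\X$ gives $D(f) = [\delta(f)]_1$, exactly as claimed, with associated object of vectors on $Z = ((A,+,0),X)$ the commutative monoid $(((A,+,0),A),(+,\pi_0+,0,\ldots),(0,\pi_0 0,0,\ldots))$ of the earlier corollary.

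Since every step is either a standard comonad fact or a direct appeal to the two immediately preceding theorems, there is essentially no obstacle here: all the substantive work — checking that the candidate comultiplication lands in $\faa^2(\X)$, and that the coalgebra axioms unwind to \cd{1}--\cd{7} — has already been carried out in establishing those theorems.
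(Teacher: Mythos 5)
Your proposal is correct and matches the paper's own route exactly: the paper offers no separate argument, remarking immediately after the corollary that it "is nothing more than stating that cofree coalgebras exist," i.e.\ that $\delta_\X$ equips $\faa(\X)$ with a cofree coalgebra structure, which the preceding theorem then translates into generalized Cartesian differential structure with $D(f) = [\delta(f)]_1$. Your identification of the coalgebra laws with the comonad counit and coassociativity identities, and of $L$ via the earlier corollary on commutative monoids in $\faa(\X)$, is precisely the bookkeeping the paper leaves implicit.
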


Of course, this is nothing more than stating that cofree coalgebras exist, but it is worth highlighting this particular result, as it shows that there are innumerable examples of generalized Cartesian differential categories.  Note that $\faa(\X)$ has trivial differential structure if and only if $1$ is the only commutative monoid in $\X$, as for example happens if $\X$ is a poset with finite meets.  But in most cases of interest (say, $\X = \textbf{sets}$), $\faa(\X)$ is highly non-trivial.

A more in-depth investigation of such cofree generalized Cartesian differential categories is clearly required; for now, we content ourselves with determining their linear maps.  

In a Cartesian differential category, a map $f: X \to Y$ is called linear if $D(f) = \pi_0 f$.  For a general map in a generalized Cartesian differential category, this is not possible, as $\pi_0 f$ is not even well-defined.  But if $L_0(X) = X$ and $L_0(Y) = Y$, then the types do match, and we can define what it maps for such maps to be linear.  

\begin{definition}
Say that an object $X$ in a generalized Cartesian differential category is a \textbf{linear object} if $L_0(X) = X$.  Say that map $f: X \to Y$ between linear objects is a \textbf{linear map} if $D(f) = \pi_0 f$. 
\end{definition}

We can now determine the linear maps in cofree generalized Cartesian differential categories.

\begin{proposition}
The linear maps in $\faa(\X)$ are exactly those maps of the form $\lambda(f)$ for $f$ an additive map from $(A,+_A,0_A)$ to $(B,+_B,0_B)$.
\end{proposition}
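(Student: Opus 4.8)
The plan is to unwind what it means for a morphism in $\faa(\X)$ to be linear and match it against the description of $\lambda(f)$. First I would recall that a linear object $X$ in a generalized Cartesian differential category is one with $L_0(X) = X$; translating through the coalgebra structure on $\faa(\X)$ (where $L$ is given by the comultiplication $\delta$), an object $((A,+_A,0_A),X)$ is linear precisely when $\delta_0$ applied to it returns the same object, i.e. when $X = A$ and the monoid structure on $X=A$ is the given $(+_A,0_A)$. So the linear objects of $\faa(\X)$ are exactly the objects of the form $((A,+_A,0_A),A)$ coming from commutative monoids, which is the image of $\lambda$ on objects.

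Next I would take a morphism $(g_*,g_1,g_2,\ldots): ((A,+_A,0_A),A) \to ((B,+_B,0_B),B)$ between two such linear objects and compute what the linearity condition $D[g] = \pi_0 g$ says. Recall $D[g] = [\delta(g)]_1$, and the counit forces $[\delta(g)]_* $ to recover $g$ in the appropriate slot. The condition $D[g] = \pi_0 g$ should, when spelled out componentwise, force $g_* = g_1$-in-an-essential-way, $g_1 = \pi_0 g_*$, and all higher $g_n = 0$ — precisely the shape $(f,\pi_0 f,0,0,\ldots) = \lambda(f)$. I would be careful here to use the explicit formula for $D_n$ from the coalgebra-to-GCDC direction, namely $D_n(f) = \<0,\ldots,0,\pi_0,\ldots,\pi_n\>D^n(f)$, together with [CD.6] and [CD.7], to see that once $D[g] = \pi_0 g_*$ the higher differentials $D^n(g)$ all collapse, hence all $g_n$ for $n \geq 2$ vanish; this mirrors the computation at the end of the coalgebras theorem showing $\mathcal{D}(+)$ has vanishing higher terms.

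For the converse direction I would check directly that each $\lambda(f) = (f,\pi_0 f,0,0,\ldots)$ is linear: its differential in $\faa(\X)$ is $[\delta(\lambda(f))]_1$, which by the definition of $\delta$ on arrows (as in \cite{faa}) is computed from the sequence $(f,\pi_0 f, 0,\ldots)$, and one reads off that the first differential component is $\pi_0 f = \pi_0 \lambda(f)$ in the relevant sense, while the additivity of $f$ guarantees $\lambda(f)$ is a well-formed morphism between the linear objects $((A,+_A,0_A),A)$ and $((B,+_B,0_B),B)$ in the first place. The main obstacle I anticipate is bookkeeping: correctly identifying how $D[-]$ on $\faa(\X)$ unpacks in terms of the sequence components and the projections $\pi_0,\pi_1$, and making sure the indexing in $D_n$ and the structure of $\delta$ on morphisms line up so that ``$D[g] = \pi_0 g$'' really does pin down the whole sequence $(f,\pi_0 f,0,0,\ldots)$ rather than something weaker. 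Once that translation is set up cleanly, both inclusions are short.
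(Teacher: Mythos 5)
Your identification of the linear objects, the extraction of $f_1 = \pi_0 f_*$ from the star components of $D[g]=\pi_0 g$, and the converse check are all fine and agree with the paper. The genuine gap is in the step you defer to ``bookkeeping'': the claim that once $D[g]=\pi_0 g$ the higher differentials collapse ``hence all $g_n$ for $n\geq 2$ vanish.'' The formula $D_n(f) = \<0,\ldots,0,\pi_0,\ldots,\pi_n\>D^n(f)$ belongs to the \emph{other} direction of the coalgebra theorem: it is how one \emph{builds} a coalgebra from a given differential. To apply it to an arbitrary morphism $g=(g_*,g_1,g_2,\ldots)$ of $\faa(\X)$, whose components are independent data, you must know that the cofree coalgebra structure $\delta$ agrees with the coalgebra reconstructed from its induced differential $D=[\delta(-)]_1$, i.e.\ that $[\delta(g)]_n = D_n(g)$ and hence $g_n = (D_n(g))_*$. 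That identity is precisely the uniqueness/round-trip part of the correspondence; it is not stated in this paper, and proving it amounts to the same componentwise induction you are trying to avoid. (Granting it, the collapse itself is also driven by {\bf [CD.5]} with {\bf [CD.3]} and {\bf [CD.2]} --- giving $D^n(g)=\pi_0\cdots\pi_0\,g$ and then $D_n(g)=0\,g=0$ --- rather than by {\bf [CD.6]}--{\bf [CD.7]}; and note $0\,g=0$ itself needs {\bf [CD.2]} applied to the linear $g$, since $g_*$ is not assumed to preserve $0$.)

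The paper closes exactly this gap by a direct induction using the combinatorial formulas from the Fa\`a di Bruno construction, with no appeal to the reconstruction identity: for $n\geq 2$ it equates the $(n-1)$-components of $\pi_0 f$ and of $\delta(f)_1$. On the left, the tree formula for composition makes every term vanish --- each term except the one for the tree with a single node out of the root hits some $f_i$ on a zero argument (additivity), and the exceptional term is $f_{n-1}$, which is $0$ by the induction hypothesis (the base case $n=2$ is done by hand, setting the first argument to $0$ and using additivity of $f_1$). On the right, the explicit formula for $\delta$ shows $(\delta(f)_1)_{n-1}$ is $f_n$ plus terms involving $f_{n-1}=0$. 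Hence $f_n=0$. Your outline could be repaired either by carrying out this induction or by explicitly invoking (and justifying from \cite{faa}) the fact that any coalgebra structure --- in particular $\delta$ on $\faa(\X)$ --- has its higher components determined by its first; as written, the crucial link between the sequence components $g_n$ and the iterated differentials is assumed rather than proved.
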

\begin{proof}
Note that the linear objects in $\faa(\X)$ are those objects of the form 
	\[ \lambda(A,+_A,0_A) = ((A,+_A,0_A),A). \]
Now suppose
	\[ ((A,+_A,0_A,A) \to^{(f_*,f_1, f_2, \ldots)} ((B,+_B,0_B),B) \]
is a linear map.  We want to show that $f = \lambda(f_*)$, ie., that 
	\[ f_1 = \pi_0 f_* \mbox{ and } f_n = 0 \ \forall n \geq 2. \]
Since $f$ is linear, we have 
	\[ (\pi_0 f)_* = (D(f))_* = (\delta(f)_1)_*. \]
But $(\pi_0 f)_* = \pi_0f_*$ and from the definition of $\delta$, $(\delta(f)_1)_* = f_1$, so we have $f_1 = \pi_0 f_*$.  

We shall now prove that for all $n \geq 2$, $f_n = 0$ by induction on $n$.  For the case $n=2$, we know that $(\pi_0 f)_1 = (\delta(f)_1)_1$, so by the definition of $\delta$ and composition in $\faa(X)$, we have
	\[ \<\pi_0 \pi_0, \pi_1\pi_0\>f_1 = \<\pi_0\pi_1, \pi_1\pi_0,\pi_1\pi_1\>f_2 + \<\pi_0\pi_0, \pi_1\pi_1\>f_1. \]
In particular, in a context $\<a,b,c,x\>$, we have
	\[ \<a,c\>f_1 = \<b,c,x\>f_2 + \<a,x\>f_1. \]
Then setting $a = 0$ and recalling that $f_1$ is linear in its first variable, we have
	\[ 0 = \<b,c,x\>f_2 \]
For any $b,c,x$.  Thus $f_2 = 0$.  

For $n > 2$, we also have $(\pi_0 f)_{n-1} = (\delta(f)_1)_{n-1}$.  For the left side, by the definition of composition in $\faa(\X)$, $(\pi_0 f)_{n-1}$ is a sum over certain binary trees $\tau$. But with the exception of the binary tree with a single node out of the root, the expression $(\pi_0 \star f)_{n-1}$ will $f_i$ applied to at least one term with $0$; since each $f_i$ is additive, these expression are then $0$.  For the tree with a single node out of the root, we have $f_{n-1}$ applied to some terms.  But by the induction assumption, $f_{n-1}$ is 0, so we have $(\pi_0 f)_{n-1} = 0$.  

For the right side, recalling the definition of $(\delta(f)_1)_{n-1}$ from \cite{faa}, the only possible choices for the index $s$ are $0$ or $1$ (as in this case $r=1$).  $(\delta(f)_1)_{n-1}$ is then a sum of terms, one of which is $f_n$, the other terms $f_{n-1}$ applied at some value.  However, by the induction assumption, $f_{n-1} = 0$, so $(\delta(f)_1)_{n-1} = f_n$.  Putting this together with the above gives $f_n = 0$, as required.

We have thus shown that if $f$ is linear, then $f$ must be of the form $\lambda(f_*)$; conversely, the above calculations also show that maps of such form are linear.
\end{proof}


\section{Differential restriction categories revisited}\label{sectionDiff}

As the first part of this paper generalized the Cartesian differential categories of \cite{cartDiff}, so this paper generalizes the differential restriction categories of \cite{diffRes}.  The immediate benefit of the generalized version is that, unlike with ordinary differential restriction categories, splitting the restriction idempotents of a differential restriction category retains differential structure.  We shall also describe the restriction version of the \Faa di bruno comonad, an aspect that has not been explored in even the non-generalized version.  

\begin{definition}
A \textbf{generalized differential restriction category} is a Cartesian restriction category with: \begin{itemize}
	\item for each object $X$, a total commutative monoid $L(X) = (L_0(X),+_X,0_X)$, satisfying 
		\[ L(L_0(X)) = L(X) \mbox{ and } L(X \times Y) = L(X) \times L(Y), \]
	\item for each map $f: X \to Y$, a map $D[f]: L_0(X) \times X \to L_0(Y)$ such that:
		\begin{enumerate}[{\bf [DR.1]}]
		\item $D[+_X] = \pi_0 +_X$ and $D[0_X]=\pi_0 0_X$;
		\item $\< a+b,c\>D[f] = \< a,b\>D[f] + \< b,c\>D[f]$ and $\< 0,a\>D[f] = \rst{af}0$;
		\item $D[\pi_0] = \pi_0\pi_0$, and $D[\pi_1] = \pi_0\pi_1$;
		\item $D[\<f,g\>] = \< D[f],D[g]\>$;
		\item $D[fg] = \<D[f],\pi_1f \>D[g]$;
		\item $\<\<a,0\>,\<c,d\>\>D[D[f]] = \rst{c} \<a,d\>D[f]$;
		\item $\< \< 0,b\>,\< c,d\>\> D[D[f]] = \<\< 0,c\>,\< b,d\>\> D[D[f]]$;
		\item $D[\rst{f}] = (1 \times \rst{f})\pi_0 = \rs{\pi_1 f}\pi_0$;
		\item $\rst{D[f]} = 1 \x \rst{f} = \rs{\pi_1 f}$.
		\end{enumerate}
\end{itemize}
\end{definition}

We recall a number of examples.  

\begin{example}
Any generalized Cartesian differential category is a generalized differential restriction category, when equipped with the trivial restriction structure ($\rs{f} = 1$ for all $f$).
\end{example}

\begin{example}
Any differential restriction category is a generalized differential restriction category, with $L(X) = X$ for each $X$.
\end{example}

The standard example of a differential restriction category is:

\begin{example}
Smooth functions defined on open subsets of ${\cal R}^n$.
\end{example}

More examples, such as differential restriction categories of rational functions, can be found in \cite{diffRes}.

Recall that if $\X$ is a restriction category, then the restriction idempotent splitting of $\X$, $K_r(\X)$, is a restriction category with:
\begin{itemize}
	\item objects restriction idempotents $(X, e = \rs{e})$;
	\item a map $f: (X,e_1) \to (Y,e_2)$ is a map $f: X \to Y$ such that $e_1fe_2 = f$;
	\item restriction and composition as in $\X$, and $1_{(X,e)} := e$.
\end{itemize}

One problem with differential restriction categories is that even if $\X$ is a differential restriction category, $K_R(\X)$ need not be: because of $\bf [DR.9]$, the derivative must be total in the first variable, and so the derivative of a map $f: (X,e_1) \to (Y,e_2)$ cannot have domain $(X,e_1) \times (X,e_1)$.  With generalized differential restriction categories, this is no longer a problem, as we can set $L(X,e_1) = (L(X),1)$.  

\begin{proposition}\label{propSplitting}
If $\X$ is a generalized differential restriction category then $K_r(\X)$ is also, with 
	\[ L(X,e) := (L(X),1) \mbox{ and } D(f) := D(f).  \]
\end{proposition}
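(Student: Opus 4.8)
The plan is to verify each of the nine axioms \dr{1}--\dr{9} for the structure on $K_r(\X)$ defined by $L(X,e) := (L(X),1)$ and $D(f) := D(f)$, using the fact that a map $f: (X,e_1) \to (Y,e_2)$ in $K_r(\X)$ is just a map $f: X \to Y$ in $\X$ with $e_1 f e_2 = f$, and that restriction and composition are computed as in $\X$. First I would record the bookkeeping facts that make the definitions type-check: since $L(X,e) = (L(X),1)$, the object $L_0(X,e)$ is $(L_0(X),1)$, so $D(f)$ must have domain $(L_0(X),1) \times (X,e_1) = (L_0(X) \times X, 1 \times e_1)$ and codomain $(L_0(Y),1)$; I need to check $(1 \times e_1) D[f] \cdot 1 = D[f]$ in $\X$, i.e. $(1 \times e_1) D[f] = D[f]$. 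This follows from \dr{8} applied in $\X$: since $e_1 f e_2 = f$ we have $\rst{f} \geq \rst{\pi_1 f}$-type manipulations, and more directly $(1 \times e_1) D[f] = (1 \times \rst{e_1}) D[f]$, and because $e_1 = \rst{f}$-related... actually the clean argument is: $f = e_1 f$ so $D[f] = D[e_1 f] = \<D[e_1], \pi_1 e_1\> D[f]$ by \dr{5}, and $D[e_1] = D[\rst{e_1}] = (1 \times e_1)\pi_0$ by \dr{8}; unwinding gives $D[f] = (1 \times e_1) \pi_0 \cdots$, which after simplification yields $(1\times e_1)D[f] = D[f]$. I would also check $L(L_0(X,e)) = L(X,e)$ and $L((X,e_1)\times(Y,e_2)) = L(X,e_1)\times L(Y,e_2)$, both of which are immediate from $L(X,e) = (L(X),1)$ and $L(X \times Y) = L(X) \times L(Y)$ together with $1 \times 1 = 1$.

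Next I would dispatch the axioms that are purely equational and hold verbatim because composition and restriction in $K_r(\X)$ agree with $\X$. Axioms \dr{1}, \dr{3}, \dr{4}, \dr{5}, \dr{7} involve no restriction on the right-hand side and are literally the same equations as in $\X$; the only subtlety is confirming that the identity maps $1_{(X,e)} = e$ do not intrude — e.g. for \dr{3}, the projection $\pi_0: (X,e_1)\times(Y,e_2) \to (X,e_1)$ is the $\X$-map $(e_1 \times e_2)\pi_0$, and one checks $D$ of it equals $\pi_0 \pi_0$ in $K_r(\X)$ using \dr{3} and \dr{5} in $\X$ plus the $(1\times e_1)$-absorption above. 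For the restriction-flavoured axioms \dr{2} (second clause), \dr{6}, \dr{8}, \dr{9}, I would observe that the restriction of a $K_r(\X)$-map $g: (X,e_1)\to(Y,e_2)$, computed in $K_r(\X)$, is $\rst{g}$ as computed in $\X$ (which automatically satisfies $\rst{g} \leq e_1$), so the identities transport directly: \dr{9} becomes $\rst{D[f]} = \rs{\pi_1 f}$ in $\X$, which holds; \dr{8} becomes $D[\rst{f}] = \rs{\pi_1 f}\pi_0$, noting $\rst{f}$ in $K_r(\X)$ is $\rst{f}$ in $\X$; \dr{6} and the second clause of \dr{2} similarly. I expect the main obstacle to be precisely the interaction flagged above: making sure that $D[f]$, defined as the $\X$-derivative, genuinely lands in the hom-set of $K_r(\X)$ — that is, verifying the idempotent-absorption condition $(1_{L_0(X)} \times e_1)\,D[f] = D[f]$ — since this is the one place where the restriction structure of $\X$ must be combined with the differential axioms (via \dr{5} and \dr{8}) rather than just quoted. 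Once that lemma is in hand, everything else is routine transport along the identification of $K_r(\X)$-composition and $K_r(\X)$-restriction with those of $\X$, and I would present the proof as: (i) the absorption lemma, (ii) the two $L$-equations, (iii) a one-line remark that \dr{1}--\dr{7} and \dr{9} transport verbatim, (iv) a short check of \dr{8} and the restriction clauses.
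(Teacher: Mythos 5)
Your proposal is correct and follows the same overall decomposition as the paper: the only substantive point is the absorption lemma $(1 \times e_1)D[f] = D[f]$ guaranteeing that $D[f]$ is a legitimate map $(L_0(X) \times X, 1 \times e_1) \to (L_0(Y),1)$ of $K_r(\X)$, after which everything transports because composition and restriction in $K_r(\X)$ are computed in $\X$. Where you differ is in how that lemma is proved. The paper's argument is a one-liner using \dr{9}: $(1 \times e_1)D[f] = (1 \times e_1)\rst{D[f]}\,D[f] = (1 \times e_1)(1 \times \rs{f})D[f] = (1 \times \rs{f})D[f] = D[f]$, since $f = e_1 f$ gives $e_1\rs{f} = \rs{f}$. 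Your route via \dr{5} and \dr{8} also works, but the step you leave as ``after simplification'' does need to be finished: from $D[f] = D[e_1 f] = \<D[e_1],\pi_1 e_1\>D[f]$ and $D[e_1] = (1 \times e_1)\pi_0 = \rst{\pi_1 e_1}\,\pi_0$ one must check $\<\rst{\pi_1 e_1}\,\pi_0, \pi_1 e_1\> = \rst{\pi_1 e_1}\<\pi_0,\pi_1 e_1\> = 1 \times e_1$, which then yields the absorption; this is routine Cartesian-restriction algebra but is exactly the content of the claim, so it should be written out (your earlier attempt in terms of ``$\rst{f} \geq \rst{\pi_1 f}$-type manipulations'' is not the argument that lands). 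The trade-off is that the paper's proof exploits \dr{9} --- the axiom that pins down $\rst{D[f]}$ and exists precisely to control such domain issues --- and so avoids the chain rule entirely, while your version only uses \dr{5} and \dr{8}. One thing you do more carefully than the paper: you note that the axioms in $K_r(\X)$ involve the new projections $(e_1 \times e_2)\pi_0$ and identities $1_{(X,e)} = e$, so \dr{1}--\dr{7} are not literally verbatim and need the same absorption-style computation (e.g. $D[(e_1\times e_2)\pi_0] = (1\times(e_1\times e_2))\pi_0\pi_0$); the paper silently subsumes this under ``the only thing to check,'' so your extra step (iii)--(iv) is a legitimate, if routine, addition rather than a detour.
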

\begin{proof}
The only thing to check is that $D(f)$ is a valid map in the restriction idempotent splitting category.  Suppose $f: (X,e_1) \to (Y,e_2)$.  We are claiming that $D(f)$ is a valid map from $(L_0(X) \times X, 1 \times e_1)$ to $(L_0(Y), 1)$.  So consider
	\[ (1 \times e_1)D(f) = (1 \times e_1)\rs{D(f)}D(f) = (1 \times e_1)(1 \times \rs{f})D(f) = (1 \times \rs{f})D(f) = D(f) \]
by $\bf [DR.9]$ and the fact that $e_1fe_2 = f$.  
\end{proof}

\begin{corollary}
If $\X$ is a differential restriction category, then the total maps of $K_r(\X)$ form a generalized Cartesian differential category.
\end{corollary}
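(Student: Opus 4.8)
The plan is to deduce the corollary from a more general principle: \emph{the total maps of any generalized differential restriction category $\Y$ form a generalized Cartesian differential category}. Granting this, the corollary follows immediately, since a differential restriction category $\X$ is in particular a generalized differential restriction category with $L(X) = X$, so by Proposition~\ref{propSplitting} its restriction-idempotent splitting $K_r(\X)$ is again a generalized differential restriction category; applying the general principle to $\Y = K_r(\X)$ then gives the statement, the structure being the evident restriction to total maps of the $L$ and $D$ produced by Proposition~\ref{propSplitting} (so $L(X,e) = ((X,1),+_X,0_X)$ and $D$ as in $\X$).

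For the general principle, I would first invoke the standard facts that in a Cartesian restriction category the restriction-terminal object is terminal among total maps, and that the restriction products, together with their projections and tupling, restrict to honest finite products on the wide subcategory $\Total(\Y)$ of total maps; these are taken as the chosen products, so that $!$, $\pi_0$, $\pi_1$ and $\<-,-\>$ have the same meaning in $\Total(\Y)$ as in $\Y$. Each monoid $L(X) = (L_0(X),+_X,0_X)$ is total by hypothesis, hence a commutative monoid in $\Total(\Y)$, and the coherence equations $L(L_0(X)) = L(X)$ and $L(X \x Y) = L(X) \x L(Y)$ are inherited because the products of $\Total(\Y)$ are exactly the restriction products. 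Finally, \dr9 gives $\rst{D[f]} = 1 \x \rst{f}$, so for total $f$ one has $\rst{D[f]} = 1 \x 1 = 1$; thus $D$ sends total maps to total maps and restricts to an operation $D[f] : L_0(X) \x X \to L_0(Y)$ on $\Total(\Y)$.

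It then remains to check \cd1--\cd7 in $\Total(\Y)$. Since composites and tuples of total maps are total, the axioms \dr1, \dr3, \dr4, \dr5, \dr7 are syntactically the assertions \cd1, \cd3, \cd4, \cd5, \cd7 and transfer verbatim. For \cd2, the first equation of \dr2 is again identical, while for the second, $a$ and $f$ being total forces $af$ total, so $\rst{af} = 1$ and $\<0,a\>D[f] = \rst{af}0 = 0$. For \cd6, $c$ total gives $\rst{c} = 1$, so \dr6 reduces to $\<\<a,0\>,\<c,d\>\>D[D[f]] = \<a,d\>D[f]$. Axiom \dr8 plays no role beyond the single appeal to \dr9 made above. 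This would establish the general principle, and with it the corollary.

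The one point that will need genuine care — though I do not expect it to be a real obstacle — is the bookkeeping about the total subcategory: one must be sure that the restriction-terminal object and restriction products really do equip $\Total(\Y)$ with finite products in such a way that the statements of \cd1--\cd7 become identical to the relevant instances of \dr1--\dr7, and that every composite, pairing, and iterated derivative $D[D[f]]$ appearing in those axioms stays inside $\Total(\Y)$. This is routine restriction-category theory, and it is precisely what legitimises the passage from \dr1--\dr9 to \cd1--\cd7 once the restriction idempotents $\rst{af}$ and $\rst{c}$ have been observed to be identities on total maps.
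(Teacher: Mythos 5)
Your proposal is correct and follows exactly the route the paper intends: view $\X$ as a generalized differential restriction category with $L(X)=X$, apply Proposition~\ref{propSplitting} to get differential structure on $K_r(\X)$, and then use the standard fact that the total maps of a Cartesian restriction category form a Cartesian category, with \dr9 guaranteeing $D$ preserves totality and the restriction idempotents $\rst{af}$, $\rst{c}$ in \dr2, \dr6 becoming identities so that \dr1--\dr7 specialize to \cd1--\cd7. The paper leaves these checks implicit, so your write-up is essentially its proof made explicit.
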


As noted in the introduction, this shows that the categories whose objects are open subsets of $\mathbb{R}^n$, and whose maps are smooth maps between them, forms a generalized Cartesian differential category.



\subsection{\Faa \ di bruno - restriction version}

In this section, we expand the construction of the previous section to work with restriction categories.  To show that there is a version of $\faa$ suitable for restriction categories, we will need to begin by recalling the definition of composition in $\faa(\X)$ from \cite{faa} (section 2.1).  Using the notation of that section, recall that 
	\[ (fg)_n = \sum (f \star g)_{\tau}, \] 
where the sum is over each tree $\tau$ of length 2 and width $n$.  For such a tree $\tau$, the term $(f \star g)_{\tau}$ term is of the form
	\[ \<p_1f_{i_1}, p_2f_{i_2}, \ldots p_k f_{i_k}, \pi_n f_*\>g_k \]
where $k$, each $i_j$, and $p_i$ all depend on the the tree $\tau$, and each $p_i$ is of the form $\<\pi_{j_1}, \pi_{j_2}, \ldots \pi_{j_{i_1}}, \pi_n\>$ (see \cite{faa} for the exact details). 

\begin{theorem}\label{faaResVersion}
Given a Cartesian restriction category $\X$, there is a Cartesian restriction category $\faa(\X)$ which has:
\begin{itemize}
	\item objects pairs $((A,+,0),X)$, where $X$ is an object of $\X$ and $(A,+,0)$ is a (total) commutative monoid in $\X$;
	\item maps sequences
		\[ (f_*, f_1, f_2, \ldots): (A,X) \to (B,Y) \]
	where $f_*: X \to Y$, $f_n: (A)^n \times X \to B$, such that for each $i$, 
		\[ \rs{f_n} = 1 \times \rs{f_*} = \rs{\pi_nf_*}, \] 
	and each $f_n$ is additive and symmetric in the first $n$ variables
	\item composition and identities are defined as in the total case;
	\item restriction given by
		\[ \rs{(f_*, f_1, f_2, \ldots)} := (\rs{f_*}, \rs{\pi_1 f_*}\pi_0, \rs{\pi_2 f_*}0, \rs{\pi_3 f_*}0\ldots). \]
\end{itemize}
\end{theorem}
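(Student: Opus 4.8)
The plan is to inherit as much as possible from \cite{faa}. The underlying graph of $\faa(\X)$, the tree-sum formula for composition, the identities, and the product data are taken over verbatim from the total case, and the verifications in \cite{faa} that these assemble into a category with finite products are purely equational and never use totality. So the genuinely new obligations are three: (i) the new side condition $\rst{f_n} = 1 \x \rst{f_*} = \rst{\pi_n f_*}$ is stable under composition and holds for identities and for the displayed projections and pairings; (ii) the displayed restriction operator really is a morphism of $\faa(\X)$ and satisfies the four restriction identities $\rst{f}f = f$, $\rst{f}\,\rst{g} = \rst{g}\,\rst{f}$, $\rst{\rst{g}f} = \rst{g}\,\rst{f}$, $f\rst{g} = \rst{fg}f$; and (iii) the products of $\faa(\X)$ are restriction products. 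Everywhere I will use three standard facts about a Cartesian restriction category $\X$: $\rst{hk} = \rst{h\,\rst{k}}$, $\rst{\<h_1,\ldots,h_m\>} = \rst{h_1}\cdots\rst{h_m}$, and the projection identity $\rst{\pi h} = 1 \x \rst{h}$ for $\pi$ the product projection onto the domain of $h$.

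First I would record the normal form of the restriction: the projection identity turns $\rst{\pi_n f_*}$ into $1 \x \rst{f_*}$, so the displayed sequence is exactly the sequence $\crst{e} := (e,\, (1\x e)\pi_0,\, (1\x e)0,\, (1\x e)0,\ldots)$ ``generated'' by the restriction idempotent $e := \rst{f_*}$ on the point-object. Well-definedness in (ii) is then immediate: each component restricts (again by the projection identity) to $1\x e$, which is $1\x\rst{(\crst{e})_*}$, and each component is a partial projection or a partial zero into $A$, hence additive and symmetric in its first $n$ slots. For (i), apply $\rst{(-)}$ term by term to $(fg)_n = \sum_\tau (f\star g)_\tau$, where $(f\star g)_\tau = \<p_1 f_{i_1},\ldots,p_k f_{i_k},\pi_n f_*\>g_k$: using $\rst{hg_k} = \rst{h\,\rst{g_k}}$ with $\rst{g_k} = 1\x\rst{g_*}$, then the projection identity together with $\rst{f_{i_j}} = 1\x\rst{f_*}$, each $\rst{(f\star g)_\tau}$ collapses to $1\x\rst{f_* g_*}$; since the restriction of a finite sum of maps with a common restriction $r$ is $r$ again (the monoid multiplication on the codomain being total), $\rst{(fg)_n} = 1\x\rst{f_* g_*} = 1\x\rst{(fg)_*}$. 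That each $(fg)_n$ is additive and symmetric in its first $n$ variables is precisely the computation in \cite{faa}.

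For the restriction identities in (ii) I would use the normal form $\rst{f} = \crst{\rst{f_*}}$ to push everything down to $\X$. In a composite with a $\crst{e}$ on one side, every tree-term except the principal one feeds a partial zero into an $A$-valued slot of some component, and by the restriction form of additivity such a term becomes a partial zero whose restriction is already that of the principal term, so it is absorbed in the left-additive sum; the principal term is just the $X$-coordinate restriction $(1\x e)$ pre- or post-composed with the other map's $n$-th component. Hence $\rst{f}f = f$ comes down, component by component, to $\rst{f_*}f_* = f_*$ and to trivial identities among projections and zeros; $\rst{f}\,\rst{g} = \rst{g}\,\rst{f}$ and $\rst{\rst{g}f} = \rst{g}\,\rst{f}$ fall out because the only partiality in $\crst{e}$ sits in the $X$-coordinate and restriction idempotents in $\X$ commute; and $f\rst{g} = \rst{fg}f$ reduces to the same identity of $\X$ on zeroth components and to a routine check above. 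For (iii), the projections $\pi_{(A,X)},\pi_{(B,Y)}$ and the pairing of two morphisms are total in $\faa(\X)$ (total zeroth components, projections or zeros above), and the restriction formula together with $\rst{\<f,g\>} = \rst{f}\,\rst{g}$ and restriction-terminality of $1$ in $\X$ promote the product data of \cite{faa} to a restriction product.

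The step I expect to be the real obstacle is the restriction bookkeeping hidden in the phrase ``additive and symmetric in the first $n$ variables'': in the restriction setting this must be read as the analogue of \dr{2}, so that substituting a partial map into such a slot yields a partial zero of the correct restriction, and then one must check that the non-principal tree-terms produced in the two composites $\crst{e}\circ g$ and $g\circ\crst{e}$ all carry exactly the restriction of the principal term, so that the sum genuinely collapses. The remainder is \cite{faa} plus the projection identity; this combinatorial bookkeeping is where the actual care is required.
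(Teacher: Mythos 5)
Your proposal is correct and follows essentially the same route as the paper: your ``collapse to the principal tree-term'' analysis of composites with a restriction map on either side is exactly the content of the paper's Lemma \ref{lemmaResComposite} and its {[\bf R.4]} computation, the term-by-term restriction of $(fg)_n = \sum_\tau (f \star g)_\tau$ is the paper's verification that composites satisfy $\rs{(fg)_n} = \rs{\pi_n (fg)_*}$, and the restriction products are handled afterwards just as in the paper's subsequent proposition. The only caveat, which does not affect correctness, is that for $f\rs{g}$ the non-principal tree-terms vanish because the outer component $(\rs{g})_k$ with $k \geq 2$ is itself a restricted zero, rather than because a partial zero is fed into an additive slot as in the $\rs{f}g$ case you describe.
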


As is usual when determining if a category with an operation $\rs{f}$ is a restriction category, it is helpful to first determine what a map of the form $\rs{f}g$ looks like.

\begin{lemma}\label{lemmaResComposite}
With the above definition of $\rs{f}$, for $n \geq 1$ and maps $f: X \to A$, $g: X \to B$, we have
	\[ (\rs{f}g)_n = \rs{\pi_n f_*}g_n. \]
\end{lemma}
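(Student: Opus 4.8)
The plan is to unfold $(\rs f g)_n = \sum_\tau (\rs f \star g)_\tau$ using the explicit description of composition recalled just above, with $\rs f$ in place of $f$. From the definition in Theorem~\ref{faaResVersion} the components of $\rs f$ are $(\rs f)_* = \rs{f_*}$, $(\rs f)_1 = \rs{\pi_1 f_*}\pi_0$, and $(\rs f)_m = \rs{\pi_m f_*}\,0$ for $m\geq 2$. I expect exactly one tree to survive — up to the restriction idempotent $\rs{\pi_n f_*}$ — while every other tree contributes a ``zero-like'' term that gets absorbed once the sum is formed.

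First I single out the tree $\tau_0$ of length $2$ and width $n$ with $k=n$ blocks; since the block sizes $i_1,\dots,i_k$ sum to $n$, this forces every $i_j = 1$, and the associated projection tuples are $p_j = \<\pi_{j-1},\pi_n\>$. For this tree $(\rs f)_{i_j} = (\rs f)_1 = \rs{\pi_1 f_*}\pi_0$, so the restriction identity $\phi\,\rs h = \rs{\phi h}\,\phi$ together with $\<\pi_{j-1},\pi_n\>\pi_1 = \pi_n$ gives $p_j(\rs f)_1 = \rs{\pi_n f_*}\,\pi_{j-1}$, and similarly $\pi_n(\rs f)_* = \pi_n\rs{f_*} = \rs{\pi_n f_*}\,\pi_n$. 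Pulling the common idempotent $\rs{\pi_n f_*}$ out of the tuple, $(\rs f\star g)_{\tau_0} = \rs{\pi_n f_*}\,\<\pi_0,\dots,\pi_n\>\,g_n = \rs{\pi_n f_*}\,g_n$ (the tuple being the identity), which is precisely the right-hand side of the lemma. When $n=1$, $\tau_0$ is the only tree, so this already finishes the proof.

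For $n\geq 2$ I must show that every other tree $\tau$ contributes a term whose addition leaves $\rs{\pi_n f_*}g_n$ unchanged. Such a $\tau$ has $k<n$, hence a block with $i_j \geq 2$, and there $(\rs f)_{i_j} = \rs{\pi_{i_j}f_*}\,0$ factors through the zero map $0_A$; since $g_k$ is additive in its $j$-th variable, the whole term factors through $0_B$ up to restriction, i.e. $(\rs f\star g)_\tau = \rst{e_\tau}\,0$ with $e_\tau := \rs{(\rs f\star g)_\tau}$. A computation with the restriction identities $\rs{ab} = \rs{a\rs b}$, $\rs{\rs a b} = \rs a\rs b$ and the morphism condition $\rs{g_k} = \rs{\pi_k g_*}$ of $\faa(\X)$ evaluates $e_\tau$ to $\rs{\pi_n f_*}\,\rs{\pi_n g_*}$ — each factor $\rs{p_j(\rs f)_{i_j}}$ collapsing to $\rs{\pi_n f_*}$ because the point-projection inside $p_j$ composes with $f_*$ to give $\pi_n f_*$. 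Since also $\rs{\,\rs{\pi_n f_*}g_n\,} = \rs{\pi_n f_*}\,\rs{g_n} = \rs{\pi_n f_*}\,\rs{\pi_n g_*}$, we get $\rs{(\rs f\star g)_\tau} = \rs{\,\rs{\pi_n f_*}g_n\,}$, and by the standard restriction fact that $m + \rst e\,0 = m$ whenever $\rs m \leq e$, the term $(\rs f\star g)_\tau$ is absorbed. Summing over all trees then gives $(\rs f g)_n = \rs{\pi_n f_*}\,g_n$.

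The main obstacle is the bookkeeping in that last step: confirming that feeding the zero-like component $\rs{\pi_{i_j}f_*}\,0$ into the additive slot of $g_k$ genuinely yields something of the form $\rst{e_\tau}\,0$, and then pushing the projection tuples $p_j$ through the restriction identities carefully enough to see that $e_\tau$ dominates $\rs{\,\rs{\pi_n f_*}g_n\,}$ so that the absorption applies. This is routine but delicate, and it uses no ideas beyond those already needed for the tree $\tau_0$.
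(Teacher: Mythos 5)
Your proposal is correct and follows essentially the same route as the paper's proof: isolate the single tree with $n$ branches out of the root, pull the idempotent $\rs{\pi_n f_*}$ through the projection tuple to get $\rs{\pi_n f_*}g_n$, and show every other tree contributes a partial zero $\rs{\pi_n f_*}\,\rs{\pi_n g_*}\,0$ (via additivity of $g_k$ in the slot receiving $(\rs f)_{i_j}$ for $i_j\geq 2$) which is absorbed using $\rs{g_n}=\rs{\pi_n g_*}$. The only difference is presentational: you phrase the absorption as $m+\rst{e}\,0=m$ when $\rs{m}\leq e$, whereas the paper sums the terms directly, which amounts to the same restriction calculation.
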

\begin{proof}
Recalling the definition of composition as above, we have $(\rs{f} \star g)_{\tau}$ is of the form 
	\[ \<p_1\rs{f}_{i_1}, p_2\rs{f}_{i_2}, \ldots p_k \rs{f}_{i_k}, \pi_n \rs{f}_*\>g_k \]
But for any $i \geq 1$, the expression $p_j\rs{f}_{i_j}$ is a restriction of $0$, and with the exception of the tree with $n$ branches out of the root, that expression occurs at least once.  Then since each $g_k$ is additive in each of the first $n$ variables, we have 
	\[ (\rs{f} \star g)_{\tau} = \rs{\pi_n \rs{f_*} g_*} 0 = \rs{\pi_n f_*} \rs{\pi_n g_*} 0. \]
	
For that one tree $\tau_0$ with $n$ branches out of the root, 
	\[ (\rs{f} \star g)_{\tau_0} = \<\<\pi_0, \pi_n\>(\rs{f})_1, \<\pi_1,\pi_n\>(\rs{f})_1, \ldots \<\pi_{n-1}, \pi_n\>(\rs{f})_1, \pi_n f_*\>g_n. \]
But for each $i \in \{1 \ldots n-1\}$,
	\[ \<\pi_i, \pi_n\>(\rs{f})_1 = \<\pi_i, \pi_n\>\rs{\pi_1 f_*}\pi_0 = \rs{\pi_n f_*} \<\pi_i, \pi_n\>\pi_0 = \rs{\pi_n f_*}\pi_i, \]
so that
	\[ (f \star g)_{\tau_0} = \rs{\pi_n f_*} \<\pi_0, \pi_1, \ldots \pi_n\>g_n = \rs{\pi_nf_*}g_n \]
Thus, the sum over all trees $\tau$ equals
	\[ \rs{\pi_nf_*}g_n + \rs{\pi_nf_*} \rs{\pi_n g_*}0 = \rs{\pi_n f_*} \rs{\pi_n g_*}g_n = \rs{\pi_nf_*}g_n \]
by assumption on $g_n$.  Thus we have
	\[ (\rs{f}g)_n = \rs{\pi_nf_*}g_n, \]
as required.
\end{proof}

We are now in a position to prove Theorem \ref{faaResVersion}. \\

\begin{proof}

We first need to check that the identities, composites, and restriction satisfy the added requirement on the restriction of its components.  That identities satisfy the requirement is obvious, since $1_{(A,x)} = (1,\pi_0,0_A, \ldots)$ and both $\pi_0$ and $0_A$ are total.  

To check that the composite of two maps $f: (A,X) \to (B,Y)$, $g: (B,Y) \to (C,Z)$ satisfies the restriction requirement, as above, recall that $(fg)_n = \sum (f \star g)_{\tau}$. Since $\rs{x + y} = \rs{x}\rs{y}$, to calculate $\rs{(fg)_n}$, we need to calculate each $\rs{(f \star g)_{\tau}}$.  For such a tree $\tau$, this term is of the form
	\[ \<p_1f_{i_1}, p_2f_{i_2}, \ldots p_k f_{i_k}, \pi_n f_*\>g_k \]
where $k$, each $i_j$, and $p_i$ all depend on the the tree $\tau$.  In particular, however, each $p_m$ is of the form $\<\pi_{j_1}, \pi_{j_2}, \ldots \pi_{j_{i_m}}, \pi_n\>$ (where again each $j_l$ depends on $\tau$) so we have
\begin{eqnarray*}
&   & \rs{p_m f_{i_m}} \\
& = & \rs{\<\pi_{j_1}, \pi_{j_2}, \ldots \pi_{j_{i_m}}, \pi_n\>f_{i_m}} \\
& = & \rs{\<\pi_{j_1}, \pi_{j_2}, \ldots \pi_{j_{i_m}}, \pi_n\>\rs{f_{i_m}}} \\
& = & \rs{\<\pi_{j_1}, \pi_{j_2}, \ldots \pi_{j_{i_m}}, \pi_n\>\rs{\pi_{i_m} f_*}}  \\
& = & \rs{\<\pi_{j_1}, \pi_{j_2}, \ldots \pi_{j_{i_m}}, \pi_n\>\pi_{i_m} f_*} \\
& = & \rs{\pi_n f_*} \\
\end{eqnarray*}

Then we can calculate, for any such tree $\tau$,

\begin{eqnarray*}
\rs{(f \star g)_{\tau}} & =  & \rs{\<p_1f_{i_1}, p_2f_{i_2}, \ldots p_k f_{i_k},\pi_n f_*\>g_k} \\
& =  & \rs{\<p_1f_{i_1}, p_2f_{i_2}, \ldots p_k f_{i_k},\pi_n f_*\>\rs{g_k}} \\
& =  & \rs{\<p_1f_{i_1}, p_2f_{i_2}, \ldots p_k f_{i_k},\pi_n f_*\>\rs{\pi_n g_*}}  \\
& =  & \rs{\<p_1f_{i_1}, p_2f_{i_2}, \ldots p_k f_{i_k},\pi_n f_*\>\pi_n g_*} \\
& =  & \rs{\rs{p_1f_{i_1}} \rs{p_2f_{i_2}} \ldots \rs{p_k f_{i_k}}\pi_n f_* g_*} \\
& =  & \rs{\rs{\pi_n f_*} \rs{\pi_n f_*} \ldots \rs{\pi_n f_*}\pi_n f_* g_*}  \\
& = & \rs{\pi_n f_*g_*} \\
& = & \rs{\pi_n (fg)_*} \\
\end{eqnarray*}
so that we get $\rs{(fg)_n} = \rs{\pi_n (fg)_*}$, as required.  

Each restriction map satisfies the requirement on the restriction of its components since 
	\[ \rs{\rs{\pi_n f_*} \pi_0} = \rs{\pi_n}f_* = \rs{\rs{\pi_n f_*} 0} \]
as $0$ and $\pi_0$ are both total.  

That $\faa(\X)$ is a category is as in \cite{faa}.  We now turn to checking the restriction axioms.  As the equality of $*$ terms follows directly, we will simply check for the $n \geq 1$ terms.  For {[\bf R.1]}, by lemma \ref{lemmaResComposite},
	\[ (\rs{f}f)_n = \rs{\pi_n f_*} f_n = f_n \]
by assumption on $f_n$.  For {[\bf R.2]}, for $n \geq 2$, by lemma \ref{lemmaResComposite}, we have
	\[ (\rs{f}\rs{g})_n = \rs{\pi_n f_*} \rs{ \pi_n g_*} 0 = \rs{\pi_n g_*} \rs{ \pi_n f_*} 0 = (\rs{g}\rs{f})_n \]
and $n = 1$ is similar.  For {[\bf R.3]}, again by lemma \ref{lemmaResComposite}, we have
	\[ (\rs{\rs{f}g})_1 = \rs{\pi_1 (\rs{f}g)_*} 0 = \rs{\pi_1 \rs{f_*}g} 0 = \rs{\rs{\pi_1 f_*} \pi_1 g} 0 = \rs{\pi_1 f_*} \rs{\pi_1 g_*} 0 \]
which is the required value, by the calcuation for {[\bf R.2]}; again $n=1$ is similar. 

For {[\bf R.4]}, we need to find the $n$th term of $f\rs{g}$.  This time, for any tree $\tau$ with the exception of the tree with a \emph{single} node out of the root, we have
\begin{eqnarray*}
(f \star \rs{g})_{\tau} & = & \<p_1 f_{i_1}, p_2 f_{i_2}, \ldots \pi_n f_*\>\rs{\pi_n g_*} 0 \\
& = & \rs{\<p_1 f_{i_1}, p_2 f_{i_2}, \ldots \pi_n f_*\>\pi_n g_*} \<p_1 f_{i_1}, p_2 f_{i_2}, \ldots \pi_n f_*\>\ 0 \\
& = & \rs{\pi_n f_* g_*} \rs{\pi_n f_*} 0 \\
& = & \rs{\pi_n f_* g_*} 0
\end{eqnarray*}
For that one tree $\tau_1$ with a single node out of the root, we have
	\[ (f \star \rs{g})_{\tau_1} = \<f_n, \pi_n f_*\>\rs{\pi_1 g_*} \pi_0 = \rs{\pi_n f_* g_n} f_n.  \]
Then summing over all trees $\tau$ gives
	\[ (f \rs{g})_n = \rs{\pi_n f_* g_*} 0 + \rs{\pi_n f_* g_n} f_n = \rs{\pi_n f_* g_n} f_n.  \]
Conversely, by lemma \ref{lemmaResComposite}, we have
	\[ (\rs{fg}f)_n = \rs{\pi_n (fg)_*} f_n = \rs{\pi_n f_* g_*} f_n \]
so that {[\bf R.4]} is satisfied, as required.  

\end{proof}

\begin{proposition}\label{faaInequalities}
In the restriction category $\faa(\X)$, for maps $f,g: (X,A) \to (Y,B)$:
\begin{enumerate}[(i)]
	\item $f$ is total if and only if $f_*$ is total;
	\item $f \leq g$ if and only if $f_* \leq g_*$ and $f_n \leq g_n$ for each $n \geq 1$;
	\item $f \smile g$ if and only if $f_* \smile g_*$ and $f_n \smile g_n$ for each $n 
	\geq 1$.
\end{enumerate}
\end{proposition}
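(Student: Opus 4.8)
The plan is to reduce each of the three statements to a componentwise comparison, using Lemma~\ref{lemmaResComposite} together with the constraint $\rs{f_n} = \rs{\pi_n f_*}$ built into the definition of a morphism of $\faa(\X)$. Recall that in a restriction category $f$ is total if and only if $\rs{f} = 1$, that $f \leq g$ if and only if $\rs{f}g = f$, and that $f \smile g$ if and only if $\rs{f}g = \rs{g}f$. Since two sequences in $\faa(\X)$ are equal precisely when they agree in every component, each of these equations splits into a condition on the $*$-components and a family of conditions on the $n$-components for $n \geq 1$, and the work is to identify these conditions. The $*$-component of any composite $hg$ is $h_*g_*$, and $(\rs{f})_* = \rs{f_*}$ by definition of the restriction in $\faa(\X)$, so the $*$-components behave as in $\X$; the $n$-components are handled by Lemma~\ref{lemmaResComposite}.

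First I would dispatch (i). Comparing $*$-components of $\rs{f} = (\rs{f_*}, \rs{\pi_1 f_*}\pi_0, \rs{\pi_2 f_*}0, \ldots)$ and $1_{(A,X)} = (1, \pi_0, 0, 0, \ldots)$ shows that $f$ total forces $f_*$ total. Conversely, if $f_*$ is total then $\rs{\pi_n f_*} = \rs{\pi_n \rs{f_*}} = \rs{\pi_n} = 1$, since the projections are total in a Cartesian restriction category, so the $n$-th component of $\rs{f}$ is $\rs{\pi_n f_*}\pi_0 = \pi_0$ for $n = 1$ and $\rs{\pi_n f_*}0 = 0$ for $n \geq 2$, matching $1_{(A,X)}$; hence $\rs{f} = 1$.

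For (ii) and (iii) I would compute the composites using Lemma~\ref{lemmaResComposite}: $(\rs{f}g)_* = \rs{f_*}g_*$ and, for $n \geq 1$, $(\rs{f}g)_n = \rs{\pi_n f_*}g_n = \rs{f_n}g_n$, the last step using $\rs{f_n} = \rs{\pi_n f_*}$; and symmetrically with $f$ and $g$ swapped. Hence $\rs{f}g = f$ is equivalent to $\rs{f_*}g_* = f_*$ together with $\rs{f_n}g_n = f_n$ for all $n \geq 1$, that is, to $f_* \leq g_*$ and $f_n \leq g_n$ for all $n \geq 1$, proving (ii); while $\rs{f}g = \rs{g}f$ is equivalent to $\rs{f_*}g_* = \rs{g_*}f_*$ together with $\rs{f_n}g_n = \rs{g_n}f_n$ for all $n \geq 1$, that is, to $f_* \smile g_*$ and $f_n \smile g_n$ for all $n \geq 1$, proving (iii). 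In both cases the reverse implication is immediate, since agreement of all components yields equality of the sequences.

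Since Lemma~\ref{lemmaResComposite} already carries the computational burden, there is no real obstacle; the only point requiring a moment's care is the propagation of totality in (i) --- namely that, because of the constraint $\rs{f_n} = \rs{\pi_n f_*}$ and the totality of the projections, totality of $f_*$ alone already forces every component of $\rs{f}$ to agree with $1_{(A,X)}$.
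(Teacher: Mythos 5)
Your proof is correct and follows essentially the same route as the paper: reduce totality, $\leq$, and $\smile$ to componentwise conditions, using Lemma~\ref{lemmaResComposite} together with the constraint $\rs{f_n} = \rs{\pi_n f_*}$ (and totality of projections for part (i)). No gaps to report.
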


\begin{proof}
\begin{enumerate}[(i)]
	\item If $f$ is total, then in particular $(\rs{f})_* = 1$, so $f_*$ is total.  Conversely, if $f_*$ is total, then for $n \geq 2$, 
		\[ (\rs{f})_n = \rs{\pi_n f_*} 0 = \rs{\pi_n} 0 = 0 \]
	and similarly $(\rs{f})_1 = \pi_0$, so that $f$ is total.  
	\item Recall that $f \leq g$ means $\rs{f}g = f$.  So $f \leq g$ if and only if $f_* \leq g_*$ and for each $n \geq 1$, $(\rs{f}g)_n = f_n$.  But by lemma \ref{lemmaResComposite},
		\[ (\rs{f}g)_n = \rs{\pi_n f_*} g_n = \rs{f_n} g_n \]
	by assumption on $f_n$.  Thus $f \leq g$ if and only if $f_* \leq g_*$ and for each $n \geq 1$, $f_n \leq g_n$. 
	\item Recall that $f \smile g$ means $\rs{f}g = \rs{g}f$.  The result then follows as in (ii).
\end{enumerate}
\end{proof}
	
\begin{proposition}
With product structure as in the total case:
	\[ \pi_i = (\pi_i, \pi_0\pi_i, 0, 0, \ldots), \<f,g\>_n := \<f_n, g_n\>, \]
$\faa(\X)$ is a Cartesian restriction category.
\end{proposition}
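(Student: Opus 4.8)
By Theorem~\ref{faaResVersion} (or rather its proof) $\faa(\X)$ is a restriction category, and by \cite{faa} its underlying category has finite products; what remains is to verify that these products are \emph{restriction} products and that $\faa(\X)$ has a restriction terminal object. I would first dispatch the terminal object $((1,!,!),1)$, the trivial monoid on the terminal object of $\X$: its unique incoming arrow $(!,!,!,\ldots)$ is a legitimate map of $\faa(\X)$ (every component is total, so the restriction condition holds automatically, and every map into $1$ is additive and symmetric), it is total by Proposition~\ref{faaInequalities}(i), and the identity $g=\rst{g}\,!$ needed for a restriction terminal map reduces component-wise, via Lemma~\ref{lemmaResComposite} and the defining condition $\rst{g_n}=\rst{\pi_n g_*}$, to $1$ being a restriction terminal object of $\X$.

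For the binary product I would take $\langle f,g\rangle$ with $\langle f,g\rangle_* = \langle f_*,g_*\rangle$ and $\langle f,g\rangle_n = \langle f_n,g_n\rangle$. First one checks this is a well-defined arrow: since the monoid on $B\times C$ is componentwise (up to the interchange $\mathrm{ex}$), a pairing of maps additive and symmetric in their first $n$ variables is again such, and $\rst{\langle f_n,g_n\rangle}=\rst{f_n}\,\rst{g_n}=(1\times\rst{f_*})(1\times\rst{g_*})=1\times\rst{\langle f_*,g_*\rangle}$, as required. Next $\rst{\langle f,g\rangle}=\rst{f}\,\rst{g}$ is checked component-wise: the $*$-component is the corresponding equation in the Cartesian restriction category $\X$, and for $n\geq 1$ both sides compute (using Lemma~\ref{lemmaResComposite} on the right-hand side) to $\rst{\pi_n\langle f_*,g_*\rangle}\,0$. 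The projections $\pi_i=(\pi_i,\pi_0\pi_i,0,0,\ldots)$ are total by Proposition~\ref{faaInequalities}(i).

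The substantive step, and the one I expect to be the main obstacle, is the pair of equations $\langle f,g\rangle\pi_0=\rst{g}f$, $\langle f,g\rangle\pi_1=\rst{f}g$, together with uniqueness of $\langle f,g\rangle$. Expanding $(\langle f,g\rangle\pi_0)_n$ for $n\geq 1$ by the tree-sum composition formula of \cite{faa}, and using that $\pi_0$ has zero components in all indices $\geq 2$, leaves the single genuine summand coming from the tree with one node out of the root, which works out to $\langle f_n,g_n\rangle\pi_0=\rst{g_n}f_n=\rst{\pi_n g_*}f_n$, together with a family of ``restriction-of-zero'' summands, one for each tree with at least two nodes out of the root, each equal to $\rst{\pi_n\langle f_*,g_*\rangle}\,0=\rst{\pi_n f_*}\,\rst{\pi_n g_*}\,0$ (by the same restriction computation as in the proof of Theorem~\ref{faaResVersion}). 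One then checks these corrections are absorbed: writing $u=\rst{\pi_n g_*}f_n$ one has $\rst{u}=\rst{\pi_n f_*}\,\rst{\pi_n g_*}$, so the correction term is $\rst{u}\,0$, and $u+\rst{u}\,0=\langle u,\rst{u}\,0\rangle+_B=\rst{u}\langle u,0\rangle+_B=\rst{u}(u+0)=u$, using the identity $\langle e a,e b\rangle=e\langle a,b\rangle$ for a restriction idempotent $e$ (a short consequence of the restriction-product axioms in $\X$) and $u+0=u$; hence $(\langle f,g\rangle\pi_0)_n=\rst{\pi_n g_*}f_n=(\rst{g}f)_n$ by Lemma~\ref{lemmaResComposite}, and the $\pi_1$ equation is symmetric. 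Finally, uniqueness of $\langle f,g\rangle$ follows either by the analogous tree computation, showing any $h$ with the stated behaviour on the two projections satisfies $h=\langle h\pi_0,h\pi_1\rangle$ component-wise, or by using Proposition~\ref{faaInequalities}(ii) to push the comparison down to the component level, where $\X$ is Cartesian. Collecting these verifications shows the products are restriction products, so $\faa(\X)$ is a Cartesian restriction category.
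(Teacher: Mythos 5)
Your proposal is correct and takes essentially the same route as the paper's proof: a componentwise computation of $(\<f,g\>\pi_0)_n$ via the tree-sum composition formula, isolating the single tree with one node out of the root and absorbing the remaining restricted-zero summands $\rst{\pi_n f_*}\,\rst{\pi_n g_*}\,0$ using Lemma \ref{lemmaResComposite} and the condition $\rst{f_n}=\rst{\pi_n f_*}$, together with the componentwise check that $\rst{\<f,g\>}=\rst{f}\,\rst{g}$. The paper records only these two computations (stating the projection identity in the form $\<f,g\>\pi_0\leq f$) and leaves the restriction terminal object, the well-definedness of the pairing, and the uniqueness clause implicit under ``product structure as in the total case,'' so your additional verifications are consistent elaborations of the same argument rather than a different method.
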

\begin{proof}
We first need to show $\<f,g\>\pi_0 \leq f$, so consider the term $(\<f,g\>\pi_0)_n$.  As in the proof of {[\bf R.4]} in Theorem \ref{faaResVersion}, for any tree $\tau$ with the exception of the the tree $\tau_1$ which has a single node coming out of the root, $(\<f,g\> \star \pi_0)_{\tau}$ is of the form
\begin{eqnarray*}
&   & \<p_1 \<f,g\>_{i_1}, p_2 \<f,g\>_{i_2}, \ldots \pi_n \<f,g\>_*\>0 \\
& = & \rs{p_1 \<f,g\>_{i_1}} \rs{p_2 \<f,g\>_{i_2}} \ldots \rs{\pi_n\<f,g\>_*} 0 \\
& = & \rs{\pi_n\<f_*,g_*\>} 0 \\
& = & \rs{\pi_nf_*} \rs{\pi_n g_*} 0
\end{eqnarray*}
while for $\tau_1$,
\begin{eqnarray*}
(\<f,g\> \star \pi_0)_{\tau_1} & = & \<(\<f,g\>_n, \pi_n \<f,g\>_*\>\pi_0 \pi_0 \\
& = & \<\<f_n, g_n\>, \pi_n \<f_*, g_*\>\>\pi_0 \pi_0 \\
& = & \rs{g_n} \rs{\pi_n f_*} \rs{\pi_n g_*} f_n \\
& = & \rs{g_n}f_n \mbox{ (by assumption on $g_n$)} 
\end{eqnarray*}
Thus
	\[ (\<f,g\>\pi_0)_n = \rs{\pi_nf_*} \rs{\pi_n g_*} 0 + \rs{g_n}f_n = \rs{g_n}f_n, \]
so that by lemma \ref{faaInequalities}, $\<f,g\>\pi_0 \leq f$, as required; $\pi_1$ is similar. 

We also need to show that $\rs{\<f,g\>} = \rs{f}\rs{g}$.  For $n \geq 2$, consider
	\[ (\rs{\<f,g\>})_n = \rs{\pi_n \<f_g\>_*} 0 = \rs{\pi_n \<f_*, g_*\>} 0 = \rs{\pi_n f_*} \rs{\pi_n g_*} 0 \]
while by lemma \ref{lemmaResComposite},
	\[ (\rs{f}\rs{g})_n = \rs{\pi_n f_*} (\rs{g})_n = \rs{\pi_n f_*} \rs{\pi_n g_*} 0, \]
and $n=1$ is similar, so that $\rs{\<f,g\>} = \rs{f}\rs{g}$, as required.
\end{proof}

\begin{proposition}
$\faa$ extends to an endofunctor on the category of Cartesian restriction categories (where the maps are those functors which preserve products and restrictions on the nose), where we define
	\[ \faa(F)(f_*, f_1, f_2, \ldots) := (F(f_*), F(f_1), F(f_2), \ldots) \]
\end{proposition}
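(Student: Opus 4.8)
The plan is to exploit the fact that a morphism $F$ of Cartesian restriction categories is, in particular, a product-preserving functor, so that $\faa(F)$ is already known (from the $\textbf{cartCat}$ version established earlier) to be a well-defined, product-preserving functor between the underlying categories-with-products; everything that remains is restriction bookkeeping. Concretely I would check, in this order: (1) $\faa(F)$ sends objects of $\faa(\X)$ to objects of $\faa(\Y)$; (2) it sends morphisms to morphisms, and is functorial (preserves composition and identities); (3) it preserves the restriction operation and the product structure; and (4) $\faa(1_\X) = 1_{\faa(\X)}$ and $\faa(G \circ F) = \faa(G) \circ \faa(F)$, which are immediate from the component-wise definition.

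For (1): the image of $((A,+,0),X)$ is $((FA, F+, F0), FX)$. Since $F$ preserves the chosen products, $F+ : FA \times FA \to FA$ and $F0$ satisfy the commutative-monoid equations, these being equalities between composites of products, projections, $+$, and $0$, all preserved by $F$; and since $F$ preserves restriction, $\rs{F+} = F\rs{+} = F(1) = 1$ and likewise $\rs{F0} = 1$, so $(FA, F+, F0)$ is a \emph{total} commutative monoid, as required. For (2): if $(f_*, f_1, f_2, \ldots) : (A,X) \to (B,Y)$ is a morphism of $\faa(\X)$, then each $f_n$ is additive and symmetric in its first $n$ variables — properties expressed by equations among composites of products, projections, and $+$, hence preserved by $F$ — and $\rs{f_n} = \rs{\pi_n f_*}$. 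Applying $F$ and using that $F$ preserves restriction and the product projections, $\rs{F f_n} = F\rs{f_n} = F\rs{\pi_n f_*} = \rs{F(\pi_n) F(f_*)} = \rs{\pi_n F f_*}$, so $\faa(F)(f)$ satisfies the side-condition defining morphisms of $\faa(\Y)$. That $\faa(F)$ preserves composition and identities is exactly the total-case computation, since composition in the restriction version of $\faa(\X)$ is given by the \emph{same} formula — built only from composition and the product structure — and $1_{(A,X)} = (1, \pi_0, 0, \ldots)$ with $\pi_0, 0$ visibly preserved.

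For (3): by definition $\rs{(f_*, f_1, f_2, \ldots)} = (\rs{f_*}, \rs{\pi_1 f_*}\pi_0, \rs{\pi_2 f_*}0, \ldots)$, and $\faa(F)$ of this is $(F\rs{f_*}, F(\rs{\pi_1 f_*}\pi_0), F(\rs{\pi_2 f_*}0), \ldots)$. Since $F$ preserves restriction, products (hence projections and the terminal object, so $F(!) = !$), and since the zero map on the image object $((FB, F+_B, F0_B), FY)$ is by construction $F$ applied to the zero map of $((B,+_B,0_B),Y)$, each component here equals the corresponding component of $\rs{\faa(F)(f_*, f_1, \ldots)} = (\rs{F f_*}, \rs{\pi_1 F f_*}\pi_0, \rs{\pi_2 F f_*}0, \ldots)$; hence $\faa(F)$ preserves restriction. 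Product-preservation of $\faa(F)$ follows because the product and projections in $\faa(\X)$ are assembled from those of $\X$, which $F$ preserves on the nose. Finally, (4) is forced term-by-term by the definition $\faa(F)(f_*, f_1, \ldots) = (F f_*, F f_1, \ldots)$.

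I do not expect a genuine obstacle: the formal part (functoriality of $\faa$, preservation of composition by $\faa(F)$) is routine and identical to the total case. The only point demanding real attention is ensuring that $\faa(F)$ actually \emph{lands in} the restriction category $\faa(\Y)$ — steps (1) and (2), that the image object carries a \emph{total} commutative monoid and the image morphism satisfies $\rs{f_n} = \rs{\pi_n f_*}$ — together with restriction-preservation in (3); and all three of these reduce cleanly to $F$ preserving products and restrictions on the nose, so the proof amounts to careful matching of structure rather than any essential difficulty.
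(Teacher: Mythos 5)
Your proposal is correct and takes essentially the same route as the paper: the only substantive point, that $\faa(F)(f)$ satisfies the component condition, is verified by exactly the paper's computation $\rs{F(f_n)} = F(\rs{f_n}) = F(\rs{\pi_n f_*}) = \rs{\pi_n F(f_*)}$ using that $F$ preserves restrictions and products on the nose. The remaining checks you list are the routine ones the paper delegates to the total case.
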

\begin{proof}
The only thing to check is that $\faa{F}(f)$ satisfies the restriction requirement on its components:
	\[ \rs{F(f_n)} = F(\rs{f_n}) = F(\rs{\pi_n f_*}) = \rs{\pi_n F(f_*)} \]
since $F$ preserves restrictions and products exactly.
\end{proof}

\begin{proposition}
The endofunctor $\faa$ has the structure of a comonad, with counit $\epsilon: \mbox{\faa}(\X) \to \X$ given by:
\begin{itemize}
	\item $\epsilon((A,+,0),X) = X$,
	\item $\epsilon(f_*,f_1,f_2,\ldots) = f_*$,
\end{itemize}
and comultiplication $\delta: \faa(\X) \to \faa^2(\X)$ given by:
\begin{itemize}
	\item $\delta((A,+,0),X) = (((A,+,0),A),(+,\pi_0+,0,\ldots),(0,\pi_0 0,0,\ldots)),((A,+,0),X)))$,
	\item action on arrows as in \cite{faa}.
\end{itemize}
\end{proposition}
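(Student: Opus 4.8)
The plan is to import the total \Faa\ di Bruno comonad structure of \cite{faa} wholesale and add only what the restriction structure forces. Almost everything needed --- naturality of $\epsilon$ and $\delta$ (in particular their preservation of the specified products), and the counit and coassociativity laws --- is an equality of underlying maps of $\X$ and of $\faa(\X)$, so those arguments carry over from \cite{faa} unchanged. The genuinely new obligations, which I would dispatch in this order, are: (1) that $\delta$ sends objects of $\faa(\X)$ to objects of $\faa^2(\X)$; (2) that $\epsilon$ and $\delta$ preserve restrictions; and (3) that $\epsilon(f)$ and $\delta(f)$ meet the component-restriction requirement of their codomains.

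For (1): $\delta((A,+,0),X)$ is declared to be $((((A,+,0),A),(+,\pi_0+,0,\ldots),(0,\pi_0 0,0,\ldots)),((A,+,0),X))$, so the point is that $(((A,+,0),A),(+,\pi_0+,0,\ldots),(0,\pi_0 0,0,\ldots))$ must be a \emph{total} commutative monoid in $\faa(\X)$. That it is a commutative monoid follows just as in the earlier corollary, the same computations being valid inside the Cartesian restriction category $\faa(\X)$; totality is then immediate from Proposition \ref{faaInequalities}(i), since the underlying $\X$-maps $+$, $0$ and $\pi_0$ are all total --- $(A,+,0)$ being a total monoid by hypothesis --- so every component of $(+,\pi_0+,0,\ldots)$ and $(0,\pi_0 0,0,\ldots)$ is total in $\faa(\X)$.

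For the counit side of (2) and (3): $\epsilon$ preserves restrictions because $\rs{(f_*,f_1,f_2,\ldots)} = (\rs{f_*},\rs{\pi_1 f_*}\pi_0,\ldots)$ has $*$-component $\rs{f_*}$, so $\epsilon(\rs{f}) = \rs{f_*} = \rs{\epsilon(f)}$, and $\epsilon(f)=f_*$ carries no further requirement. For the comultiplication I would first recall the explicit formula for $\delta$ on an arrow $f=(f_*,f_1,f_2,\ldots)$ from \cite{faa}: its $*$-component is $f$ itself regarded as a morphism of $\faa(\X)$ (consistent with $\epsilon\delta=1$), and each higher component $[\delta(f)]_n$ is assembled from the $f_i$ by the same composition-style expressions that define $\faa$ on morphisms. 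What then remains is (2) that $\delta(\rs{f}) = \rs{\delta(f)}$, and (3) that $\rs{[\delta(f)]_n} = 1 \x \rs{[\delta(f)]_*}$ in $\faa(\X)$ for each $n$.

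I expect the verification of these last two equalities to be the main obstacle, though more through bookkeeping than through any new idea. Once the \cite{faa} formula for $\delta$ on arrows is unwound, the argument is the same pattern already used in Lemma \ref{lemmaResComposite} and in the composite case of the proof of Theorem \ref{faaResVersion}: every higher term in which some $f_i$ is fed a $0$ in one of its first $i$ additive slots collapses to a restriction of $0$ by additivity, and the surviving terms are controlled by the requirement $\rs{f_i} = \rs{\pi_i f_*}$; pushing these restrictions through the $\delta$-formula and then matching componentwise against the explicit description of $\rs{-}$ closes both (2) and (3). With (1)--(3) established, naturality of $\epsilon$ and $\delta$ and the comonad identities follow exactly as in \cite{faa}.
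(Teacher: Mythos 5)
Your proposal is correct and follows essentially the same route as the paper: import the total-case comonad structure from the Fa\`a di Bruno paper and verify only the restriction-specific obligations, namely that $\epsilon$ preserves restrictions (immediate from $(\rs{f})_* = \rs{f_*}$), that $\delta(f)$ satisfies the component-restriction requirement $\rs{[\delta(f)]_n} = \rs{\pi_n[\delta(f)]_*}$, and that $\delta(\rs{f}) = \rs{\delta(f)}$, each by the same componentwise bookkeeping with additivity, the hypothesis $\rs{f_n} = \rs{\pi_n f_*}$, and totality of projections that the paper carries out explicitly. Your additional point (1), that the object assignment of $\delta$ lands on a \emph{total} commutative monoid in $\faa(\X)$ via Proposition \ref{faaInequalities}(i), is a check the paper handles only in the total case and leaves implicit here, so including it is a harmless (indeed welcome) refinement rather than a different approach.
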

\begin{proof}
There are only a few additional things to check here:
\begin{enumerate}[(i)]
	\item that $\epsilon$ preserves restrictions;
	\item that $\delta(f)$ is a valid arrow in $\faa^2(\X)$;
	\item that $\delta$ preserves restrictions.
\end{enumerate}
The first part is obvious, as by definition $(\rs{f})_* = \rs{f_*}$.  

For (ii), we need to show that 
	\[ \rs{\delta(f)_n} = \rs{\pi_n \delta(f)_*}, \]
so we need to show that they are equal in each component.  For $m \geq 2$, we have
\begin{eqnarray*}
&   & (\rs{\delta(f)_n)}_m \\
& = & \rs{\pi_m (\delta(f)_n)_*} 0 \mbox{ (by definition of restriction)} \\
& = & \rs{\pi_m f_n} 0 \mbox{ (by definition of $\delta(f)$)} \\
& = & \rs{\pi_m \rs{f_n}} 0 \\
& = & \rs{\pi_m \rs{\pi_n f_*}} 0 \mbox{ (by assumption on $f_n$)} \\
& = & \rs{\pi_m \pi_n f_*} 0
\end{eqnarray*}
while
	\[ (\rs{\pi_n \delta(f)_*})_m = \rs{\pi_m (\pi_n \delta(f)_*)_*} 0 = \rs{\pi_m \pi_n f_*} 0 \]
by definition of $\delta(f)$, so that they are equal, as required.  The case $m=1$ and the $*$ component are similar.  

For (iii), we need $\delta(\rs{f}) = \rs{\delta(f)}$, so in particular we need for each $n, m \geq 1$,
	\[ ((\delta(\rs{f}))_n)_m = ((\rs{\delta(f)})_n)_m. \]
For $n, m \geq 2$, starting with the right side, we have
\begin{eqnarray*}
&   & ((\rs{\delta(f)})_n)_m \\
& = & (\rs{\pi_n \delta(f)_*}0)_m \mbox{ (by definition of restriction)} \\
& = & \rs{\pi_m(\pi_n\delta(f)_*)_*}0 \mbox{ (by lemma \ref{lemmaResComposite})} \\
& = & \rs{\pi_m \pi_n f_*} 0 \mbox{ (by definition of $\delta$).}
\end{eqnarray*}
For $((\delta(\rs{f}))_n)_m$, recalling the definition of $\delta$ from Theorem 2.2.2 in \cite{faa}, we see that $((\delta(\rs{f}))_n)_m$ is a sum of terms of the form
	\[ \<\pi_{\alpha_1, \beta_1}, \pi_{\alpha_2, \beta_2}, \ldots, \pi_m\>\rs{\pi_n f_*} 0, \]
where the indices $\alpha_i$, $\beta_j$ are given by a formula in \cite{faa}.  The particular form of these indices is not important however, as in each case we have
\begin{eqnarray*}
&   & \<\pi_{\alpha_1, \beta_1}, \pi_{\alpha_2, \beta_2}, \ldots, \pi_m\>\rs{\pi_n f_*} 0 \\
& = & \rs{\<\pi_{\alpha_1, \beta_1}, \pi_{\alpha_2, \beta_2}, \ldots, \pi_m\>\pi_n f_*} \<\pi_{\alpha_1, \beta_1}, \pi_{\alpha_2, \beta_2}, \ldots, \pi_m\> 0 \\
& = & \rs{\pi_m \pi_n f_*} 0 \mbox{ (since projections are total)}
\end{eqnarray*}
so that the sum is also $\rs{\pi_m \pi_n f_*} 0$, and we have $((\delta(\rs{f}))_n)_m = ((\rs{\delta(f)})_n)_m$.  The cases for $m,n$ equalling $1$ or $*$ or similar, so $\delta(\rs{f}) = \rs{\delta(f)}$, as required.    
\end{proof}

\begin{theorem}
The coalgebras for the comonad $(\faa,\epsilon,\delta)$ are exactly the generalized differential restriction categories.
\end{theorem}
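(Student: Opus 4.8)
The plan is to follow, in the restriction setting, the template of the earlier theorem identifying $\faa$-coalgebras with generalized Cartesian differential categories: almost all of the bijection is inherited verbatim from \cite{faa} and from that earlier argument, and the only genuinely new content is the two restriction axioms \dr{8}, \dr{9} together with the component restriction requirement that picks out the morphisms of $\faa(\X)$.

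\emph{From coalgebras to generalized differential restriction categories.} Given a $\faa$-coalgebra ${\cal D}\colon\X\to\faa(\X)$ --- that is, a product-preserving restriction functor satisfying the counit and coassociativity laws --- write ${\cal D}(X)=({\cal D}_0(X),{\cal D}_1(X))$. The counit equation forces ${\cal D}_1(X)=X$, so we set $L(X):={\cal D}_0(X)$ and $D[f]:=[{\cal D}(f)]_1$. Since the objects of $\faa(\X)$ carry \emph{total} commutative monoids and ${\cal D}$ preserves the chosen products on the nose, $L(X)$ is a total commutative monoid and $L(X\times Y)=L(X)\times L(Y)$. Coassociativity on objects yields $L(L_0(X))=L(X)$ together with \dr{1}, exactly as in the non-restriction proof, and \dr{3}--\dr{7} follow precisely as in \cite{faa} (the restriction factors $\rst{af}\,0$ in \dr{2} and $\rst{c}$ in \dr{6} being produced by the restriction structure of $\faa(\X)$ under composition, cf.\ Lemma \ref{lemmaResComposite}). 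The additivity clause of \dr{2} and the identity \dr{9} are just the statements that $D[f]={\cal D}(f)_1$ is additive in its first variable and satisfies $\rst{{\cal D}(f)_1}=\rst{\pi_1\,{\cal D}(f)_*}$, both of which hold because ${\cal D}(f)$ is by definition a morphism of $\faa(\X)$ (and ${\cal D}(f)_*=f$ by the counit); while \dr{8} is exactly the degree-one component of ${\cal D}(\rst f)=\rst{{\cal D}(f)}$, read off using the explicit formula for restriction in $\faa(\X)$.

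\emph{From generalized differential restriction categories to coalgebras.} Conversely, given a generalized differential restriction category, define ${\cal D}(X):=(L(X),X)$ and ${\cal D}(f):=(f,D[f],D_2[f],D_3[f],\ldots)$ with $D_n[f]:=\langle 0,\ldots,0,\pi_0,\ldots,\pi_n\rangle D^n[f]$, exactly as in the non-restriction case. That ${\cal D}$ is a functor, satisfies the counit and coassociativity equations, and sends $+$ to $(+,\pi_0+,0,\ldots)$ and $0$ to $(0,\pi_0 0,0,\ldots)$ all go through exactly as in \cite{faa} and in the earlier theorem (the last point via \dr{1}). Two things must be added. First, each ${\cal D}(f)$ must be a legal morphism of $\faa(\X)$: besides being additive and symmetric in its first $n$ variables (inherited from \cite{faa}), $D_n[f]$ must satisfy $\rst{D_n[f]}=\rst{\pi_n f}$; this follows by iterating \dr{9} to compute $\rst{D^n[f]}$ --- the idempotent restricting only the ``point'' coordinate by $\rst f$ --- and then precomposing with $\langle 0,\ldots,0,\pi_0,\ldots,\pi_n\rangle$, which feeds that coordinate via $\pi_n$. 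Second, ${\cal D}$ must preserve restrictions, ${\cal D}(\rst f)=\rst{{\cal D}(f)}$: its degree-one component is exactly \dr{8}, and for $n\geq 2$ one must verify $D_n[\rst f]=\rst{\pi_n f}\,0$.

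I expect this last verification, $D_n[\rst f]=\rst{\pi_n f}\,0$ for $n\geq 2$, to be the main obstacle, since it has no counterpart in \cite{faa} or in the non-restriction theorem. It should come out by computing $D^n[\rst f]$ inductively: \dr{8} gives $D[\rst f]=(1\times\rst f)\pi_0$, and differentiating this using \dr{3}--\dr{5} and \dr{8}--\dr{9} shows that each further derivative again reduces to a restriction idempotent composed with a projection, so that after precomposition with $\langle 0,\ldots,0,\pi_0,\ldots,\pi_n\rangle$ one lands on $\rst{\pi_n f}\,0$. Everything else --- the combinatorics of composition in $\faa(\X)$, the counit and coassociativity laws, and the additivity and symmetry of the $D_n[f]$ --- is a transcription of arguments already carried out in \cite{faa}.
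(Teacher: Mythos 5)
Your proposal follows essentially the same route as the paper: the same definitions of $L$, $D[f]=[{\cal D}(f)]_1$ and, conversely, ${\cal D}(f)=(f,D[f],D_2[f],\ldots)$, with \dr{9} read off from the morphism condition in $\faa(\X)$, \dr{8} from restriction-preservation, \dr{2} and \dr{6} acquiring their restriction terms as in \cite{faa}, and the converse check $\rst{D_n[f]}=\rst{\pi_n f}$ obtained by iterating \dr{9}. Your extra verification that ${\cal D}$ preserves restrictions, i.e. $D_n[\rst{f}]=\rst{\pi_n f}\,0$ for $n\geq 2$, is correct and is indeed the routine induction you sketch (using \dr{5}, \dr{3} and \dr{8} one gets $D^n[\rst{f}]=\rst{\pi_1\cdots\pi_1 f}\,\pi_0\cdots\pi_0$, which collapses to $\rst{\pi_n f}\,0$ after precomposing with $\langle 0,\ldots,0,\pi_0,\ldots,\pi_n\rangle$); the paper passes over this point silently, so including it only makes the argument more complete.
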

\begin{proof}
As before, if we have a coalgebra ${\cal D}: \X \to \faa(\X)$, we let ${\cal D}(X) = ({\cal D}_0(X),{\cal D}_1(X))$.  Since ${\cal D}$ satisfies the counit equations, we must have ${\cal D}_1(X) = X$.  We define $L(X) := {\cal D}_0(X)$, and $D[f] := [{\cal D}(f)]_1$.  

For the most part, the fact that this operation satisfies the differential restriction axioms is exactly as before, with a few minor modifications.  Since $D[f] := [{\cal D}(f)]_1$ is a map in $\faa(\X)$, we must have $\rs{D(f)_1} = \rs{\pi_1 {\cal D}(f)_*} = \rs{\pi_1 f}$.  Since $\cal D$ preserves restrictions, we have 
	\[ D(\rs{f}) = {\cal D}(\rs{f})_1 = (\rs{{\cal D}(f)})_1 = \rs{\pi_1 f}\pi_0.  \]
Thus, we have both of the added differential restriction axioms.

Note that $D(f)$ being additive in its first variable means that 
	\[ \<0,x\>D(f) = \rs{\<0,x\>[{\cal D}(f)]_1} 0 = \rs{xf_*} 0 = \rs{xf}0, \] 
giving $\dr{2}$.  Similarly, as on pg. 414 of \cite{faa}, we get $\dr{6}$ by setting a certain term equal to $0$; the extra restriction term then comes out when we project.

Conversely, if we have a generalized Cartesian differential category, and define 
	\[ {\cal D}(f) := (f,D(f), D_2(f), D_3(f), \ldots) \]
where
	\[ D_n(f) := \<0, 0, \ldots 0, \pi_0, \pi_1, \ldots \pi_n\>D^n(f), \]
the only thing we need to check here is that ${\cal D}(f)$ is a valid map in $\faa(\X)$.  That is, we need $\rs{D_n(f)} = \rs{\pi_n f}$.  But since $\rs{D(f)} = \rs{\pi_1f}$, we have $\rs{D^n(f)} = \rs{\pi_1\pi_1 \ldots \pi_1 f}$ ($n \ \pi_1$'s), so that 
	\[ \rs{D_n(f)} = \rs{\<0, 0, \ldots 0, \pi_0, \pi_1, \ldots \pi_n\>D^n(f)} = \rs{\pi_n f}, \]
as required.  
	
\end{proof}

\begin{corollary}
If $\X$ is a Cartesian restriction category, then $\faa(\X)$ is a generalized differential restriction category, with
	\[ D(f) := [\delta(f)]_1. \]
\end{corollary}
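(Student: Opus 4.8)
The plan is to derive this corollary immediately from the preceding theorem — which identifies the coalgebras of the comonad $(\faa,\epsilon,\delta)$ with exactly the generalized differential restriction categories — together with the standard fact that every comonad has cofree coalgebras. First I would note that, for any Cartesian restriction category $\X$, the pair $(\faa(\X),\delta_{\X})$ is a coalgebra for $\faa$: this is precisely the content of the counit and coassociativity laws for $(\faa,\epsilon,\delta)$, which were checked in the proposition establishing that $\faa$ is a comonad (the only new ingredients there being that $\epsilon$ and $\delta$ preserve restrictions, and these were verified in that proof).

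Next I would invoke the theorem characterizing coalgebras. Since $(\faa(\X),\delta_{\X})$ is a $\faa$-coalgebra, the translation described in that theorem's proof equips $\faa(\X)$ with the structure of a generalized differential restriction category. Unwinding that translation in this particular case, the commutative monoid assigned to an object $Y$ of $\faa(\X)$ is $L(Y) = [\delta(Y)]_0$, and the derivative of a map $f$ is $D[f] = [\delta(f)]_1$, which is exactly the formula claimed. All of the axioms $\dr{1}$ through $\dr{9}$ then hold automatically, because the theorem showed that the coalgebra laws for $\delta$ are equivalent to those axioms; no further verification is required.

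I do not expect any genuine obstacle here: every substantive computation — that $\faa(\X)$ is a Cartesian restriction category, that $\delta$ preserves restrictions, that the comonad laws hold, and that $\faa$-coalgebras satisfy $\dr{1}$–$\dr{9}$ — has already been carried out in the preceding results, so this corollary is a bookkeeping application of them. The only points worth making explicit are the identification of $L$ and $D$ on the cofree coalgebra, and the remark (exactly as in the non-restriction case) that this differential restriction structure is trivial precisely when $1$ is the only commutative monoid in $\X$, and is otherwise highly non-trivial. Alternatively one could bypass the comonad machinery altogether and verify $\dr{1}$–$\dr{9}$ directly for $\faa(\X)$ using the explicit description of $\delta$ and of composition recalled before Theorem~\ref{faaResVersion}; but this would merely reprove in a special case what the coalgebra theorem already delivers in general, so I would not take that route.
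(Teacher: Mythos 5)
Your argument is correct and is exactly the paper's: the corollary is just the observation that $(\faa(\X),\delta_{\X})$ is the cofree coalgebra (its coalgebra laws being the comonad's counit and coassociativity equations), so the preceding characterization theorem endows $\faa(\X)$ with generalized differential restriction structure, with $D(f)=[\delta(f)]_1$. No gap to report.
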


Again, this is nothing more than stating that free coalgebras exist; but it highlights the fact that there are many, many instances of generalized differential restriction categories beyond the standard examples.

\end{document}